\newtheorem{theorem}{Theorem}[section]
\newtheorem{lemma}[theorem]{Lemma}
\newtheorem{corollary}[theorem]{Corollary}
\newtheorem{proposition}[theorem]{Proposition}
\theoremstyle{definition}
\newtheorem{definition}[theorem]{Definition}
\theoremstyle{remark}
\newcommand{\mc}[1]{\mathcal{#1}}
\newcommand{\res}{\!\!\upharpoonright}
\newcommand{\abs}[1]{\lvert#1\rvert}
\newcommand{\set}[1]{\{#1\}}
\renewcommand{\geq}{\geqslant}
\renewcommand{\leq}{\leqslant}
\renewcommand{\emptyset}{\varnothing}
\newcommand{\liff}{\leftrightarrow}
\newcommand{\limplies}{\rightarrow}
\newcommand{\es}{\emptyset}
\DeclareMathOperator{\id}{id}
\newcommand{\bN}{\mathbb{N}}
\newcommand{\WF}{\mathsf{WF}}
\newcommand{\CSB}{\mathsf{CSB}}
\newcommand{\PP}{\mathsf{PP}}
\newcommand{\WPP}{\mathsf{WPP}}
\newcommand{\ACLO}{\mathsf{AC}^{\mathsf{LO}}}
\newcommand{\ACWO}{\mathsf{AC}^{\mathsf{WO}}}
\newcommand{\ZF}{\mathsf{ZF}}
\newcommand{\ZFA}{\mathsf{ZFA}}
\newcommand{\ZFC}{\mathsf{ZFC}}
\newcommand{\DC}{\mathsf{DC}}
\newcommand{\CC}{\mathsf{CC}}
\newcommand{\AC}{\mathsf{AC}}
\newcommand{\NDS}{\mathsf{NDS}}
\begin{document}

\title{Cardinal Well-foundedness and Choice}
\author{Andreas Blass and Dhruv Kulshreshtha}

\maketitle

\begin{abstract}
We consider several notions of well-foundedness of cardinals in the absence of the Axiom of Choice. Some of these have been conflated by some authors, but we separate them carefully. We then consider implications among these, and also between these and other consequences of Choice. For instance, we show that the Partition Principle implies that all of our versions of well-foundedness are equivalent. We also show that one version, concerning surjections, implies the Dual Cantor-Schr\"oder-Bernstein theorem. It has been conjectured that well-foundedness, in one form or another, actually implies the Axiom of Choice, but this conjecture remains unresolved.
\end{abstract}

\section{Introduction}

Unless specified otherwise, we work in Zermelo-Fraenkel set theory ($\ZF$), and we use \cite{Jech73} as a general reference and \cite{HR98} as an additional resource. Because, in the absence of the Axiom of Choice, some sets may admit no well-ordering, we cannot use initial ordinals as cardinals. The manner in which cardinals are coded as sets, however,
will not matter for our purposes. For example, one may follow Jech in using Scott's trick \cite[Section 11.2]{Jech73}.

The Axiom of Choice ($\AC$) implies that cardinals are well-ordered. Conversely, if cardinals are well-ordered, or even linearly ordered, then the Axiom of Choice holds. On the other hand, in the absence of $\AC$, the (standard injective) ordering of cardinals is a partial ordering. Even in this case, we can ask questions about the well-foundedness of cardinals. For example, it is still an open problem whether some version of well-foundedness of cardinals implies $\AC$.

In $\ZF$, there is an ambiguity in the notion of well-foundedness of cardinals. For example, consider the following statements:

\begin{enumerate}
\item[(1)] Given a non-empty set $X$ of cardinals, there is a $\kappa \in X$ such that for any $\lambda \in X$, if $\lambda \leq \kappa$ then $\kappa = \lambda$.
    \item[(2)] If $(\kappa_i)$ is an infinite sequence of cardinals such that $\kappa_0\geq \kappa_1 \geq \kappa_2\geq\cdots$, then there is some $n\in\bN$ such that $\kappa_n = \kappa_{n+1}$.
    \item[(3)] If $(A_i)$ is an infinite sequence of sets such that $\abs{A_0} \geq \abs{A_1} \geq \abs{A_2} \geq \cdots$, then there is some $n\in\bN$ such that $\abs{A_n} = \abs{A_{n+1}}$.
    \item[(4)] Given $(A_i)$, an infinite sequence of sets, and $(f_i)$, a corresponding sequence of injections $f_i : A_{i+1} \to A_i$, there is some $n \in \bN$ such that $\abs{A_n} = \abs{A_{n+1}}$.
    \end{enumerate}

Although $(2)$ or $(3)$ are commonly used to refer to well-foundedness (as described in Section \ref{sec: lit review}), any one of these could be regarded as expressing well-foundedness of cardinals. Note, however, that $(1)$ is different from the rest in the absence of Dependent Choice, and even $(2), (3)$, and $(4)$ split in the absence of Countable Choice\footnote{(1) has no analogous split because it doesn't mention sequences.}. Some authors appear to have conflated versions $(2)$ and $(3)$, which will be discussed in more detail in Section \ref{sec: lit review}. 

The above statements will be given a more formal label in Definition \ref{def: well-foundedness}. This is also where we will consider the relations between these and analogous statements involving the surjective ordering of cardinals. Indeed, if there is an injection from $X$ into $Y$, then there exists a surjection the other way, unless $X=\es\neq Y$. Even this trivial construction becomes interesting when we talk about sequences, as in $(4)$ above: Given a sequence of injections $(f_i : X_i \to Y_i)$, one can certainly rely on Countable Choice to get a sequence of surjections $(g_i : Y_i \to X_i)$; and Countable Choice turns out to be necessary to obtain this conclusion. We show in Theorem \ref{thm: CC equivalence} that Countable Choice is equivalent to this triviality and even to an apparently weaker form of it.

We also show in Theorem \ref{thm: surj WF implies Dual CSB} that the analogous statement to (4), with a sequence of sets $(A_i)$ and a corresponding sequence of surjections $(f_i : A_i \to A_{i+1})$, implies the Dual Cantor-Schr\"oder-Bernstein theorem, i.e. the statement ``if $X$ and $Y$ are non-empty sets and there are surjections $g: X\to Y$ and $h: Y\to X$, then there is a bijection between $X$ and $Y$." Corollary \ref{cor: surj WF consequences} highlights some consequences of this result.

Finally, we consider connections between these forms of well-founded\-ness and other consequences of $\AC$. For example, the statement ``if $X = \es$ or $Y$ surjects onto $X$, then $X$ injects into $Y$," called the Partition Principle, implies that all the forms of well-foundedness (Definition \ref{def: well-foundedness}) are equivalent. The question of whether the Partition Principle implies $\AC$ is still open, and an answer to that may provide some insight on tackling the analogous question for well-foundedness. We also look at connections with Dependent Choice and the Dual Cantor-Schr\"oder-Bernstein theorem, among others.


\section{Preliminaries}
\begin{definition} Given sets $X$ and $Y$, we say:
\begin{itemize}
    \item $\abs{X} = \abs{Y}$ if $X$ bijects onto $Y$.
    \item $\abs{X} \leq \abs{Y}$ if $X$ injects into $Y$.
    \item $\abs{X} \leq^* \abs{Y}$ if $X=\emptyset$ or $Y$ surjects onto $X$.
    \item $\abs{X} =^* \abs{Y}$ if $\abs{X}\leq^*\abs{Y}$ and $\abs{Y}\leq^*\abs{X}$.
\end{itemize}
\end{definition}
We use $<,\geq$, and $>$ in the customary way.

\begin{definition}[Abbreviations] We use the following abbreviations.
\begin{itemize}
    \item[$\CSB$] Cantor-Schr\"oder-Bernstein: If $\abs{X}\leq \abs{Y}$ and $\abs{Y}\leq \abs{X}$ then $\abs{X} = \abs{Y}$. 
    \item[$\CSB^*$] Dual Cantor-Schr\"oder-Bernstein: If $\abs{X} =^* \abs{Y}$ then $\abs{X} = \abs{Y}$ \cite[Form 168]{HR98}.
    \item[$\CC$] Choice from countably infinite families \cite[Form 8]{HR98}.
    \item[$\DC$] Dependent Choice \cite[Form 43]{HR98}.
    \item[$\PP$] Partition Principle: 
    $\abs{X} \leq^* \abs{Y} \implies \abs{X} \leq \abs{Y}$ \cite[Form 101 A]{HR98}.
    \item[$\AC$] The Axiom of Choice \cite[Form 1]{HR98}.
    
\end{itemize}
\end{definition}
Of the above statements, $\CSB$ is a theorem of $\ZF$ (see \cite[Section 2.5]{Jech73}), whereas the remaining are consequences of $\AC$ but not provable in $\ZF$.

\begin{proposition}\label{prop: CSB etc.} Given sets $X$ and $Y$, the following are provable in ZF:
\begin{enumerate}
\item[(1)] $\abs{X}\leq \abs{Y}$ and $\abs{Y}\leq \abs{X} \implies \abs{X} = \abs{Y}$,
    \item[(2)] $\abs{X} \leq \abs{Y} \implies \abs{X} \leq^* \abs{Y}$,
    \item[(3)] $\abs{X} \leq \abs{Y}$ and $\abs{Y}\leq \abs{X} \implies \abs{X} =^* \abs{Y}$,
    \item[(4)] $\abs{X}=\abs{Y}  \implies \abs{X} =^* \abs{Y}$.
\end{enumerate}
\end{proposition}

\begin{proof} First note that (1) is the statement of $\CSB$, which is provable in $\ZF$.\medskip

For (2), suppose that $g:X\to Y$ is an injection witnessing $\abs{X}\leq \abs{Y}$. If $X = \es$, then $\abs{X}\leq^*\abs{Y}$, by definition. On the other hand, if $X\neq \es$ we construct a surjection $h : Y \to X$ as follows. Since $g :X\to Y$ is injective, $g$ has an inverse function mapping $g[X]$ onto $Y$. Extend that inverse by a constant map on the rest of $Y$ to get the desired $h$. (The fact that $X\neq\es$ is used to ensure the availability of a constant map.)
\medskip

(3) follows immediately from (2)    .
\medskip

For (4), suppose $\abs{X}=\abs{Y}$, and fix a bijection $f:X\to Y$. Then $f:X\to Y$ and $f^{-1}:Y\to X$ are surjections, which means $\abs{X} =^* \abs{Y}$.
\end{proof}

\section{Dedekind Sets}
\begin{definition}
    A set $X$ is said to be \textit{finite} if for some $n \in \bN = \set{0,1,2,\ldots}$, $\abs{X} = n$. $X$ is said to be \textit{infinite} otherwise.
\end{definition}

\begin{definition}
    A set $X$ is said to be \textit{Dedekind-finite} if there is no injection $\bN \to X$, and \textit{Dedekind-infinite} otherwise.
\end{definition}
The Dedekind-finiteness of $X$ can equivalently be characterized as ``for any $Y\subsetneq X$, $\abs{Y} < \abs{X}$" or ``every injection $X\to X$ is a bijection" (see \cite[Section 2.5]{Jech73}).

It is a theorem of $\ZF$ that every finite set is Dedekind-finite, and a theorem of $\ZFC$ (but strictly weaker than $\AC$) that every Dedekind-finite set is finite (see \cite[Form 9]{HR98}). On the other hand, it is consistent with $\ZF$ for there to exist an infinite Dedekind-finite set -- see Cohen's model in \cite[Chapter 5]{Jech73}. Such a set is said to be a \textit{Dedekind set}. 


\begin{theorem}[Tarski \cite{Tarski1965}]\label{thm: tarski no bijection} Suppose that there exists a Dedekind set. Then there are sets $X$ and $Y$ such that $\abs{X} \leq \abs{Y}$, $\abs{Y} \leq^* \abs{X}$, and $\abs{X}\neq\abs{Y}$.
\end{theorem}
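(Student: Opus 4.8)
The plan is to let $D$ be a Dedekind set and build $X$ and $Y$ out of finite sequences from $D$. Specifically, take $Y$ to be the set $\mathrm{Seq}(D)$ of all finite injective sequences of elements of $D$ (equivalently, one could use all finite sequences, but injective ones are convenient), and take $X$ to be the subset consisting of sequences of odd length, or some similar ``shifted'' copy. The intuition: concatenation and truncation operations on sequences give us natural injections and surjections, while the Dedekind-finiteness of $D$ obstructs any actual bijection because $\mathrm{Seq}(D)$ inherits strong finiteness-like rigidity from $D$.

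Here is a cleaner concrete choice. Let $Y = \mathrm{Seq}(D)$ and let $X = \{s \in \mathrm{Seq}(D) : \mathrm{length}(s) \ge 1\} = \mathrm{Seq}(D) \setminus \{\emptyset\}$, the set of nonempty finite injective sequences. Then $\abs{X} \le \abs{Y}$ is immediate since $X \subseteq Y$. For $\abs{Y} \le^* \abs{X}$: map a nonempty sequence to its tail (delete the first entry); this is a surjection from $X$ onto all of $Y$ since every sequence, including $\emptyset$, is the tail of something in $X$. (When $D \ne \emptyset$, which holds since $D$ is infinite, this is well-defined and onto.) So it remains only to show $\abs{X} \ne \abs{Y}$, i.e. $\mathrm{Seq}(D)$ does not biject with $\mathrm{Seq}(D) \setminus \{\emptyset\}$; equivalently, removing one element from $\mathrm{Seq}(D)$ changes its cardinality, i.e. $\mathrm{Seq}(D)$ is Dedekind-finite (using the characterization ``$Y \subsetneq X \Rightarrow \abs{Y} < \abs{X}$'', applied with the roles reversed).

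The main obstacle, and the heart of the proof, is establishing that $\mathrm{Seq}(D)$ is Dedekind-finite whenever $D$ is. The standard fact here is: if $D$ is Dedekind-finite then so is $\mathrm{Seq}(D)$ --- this is a theorem of $\ZF$ (it appears in Jech and in the Howard--Rubin reference). I would either cite it or sketch the argument: suppose $f : \bN \to \mathrm{Seq}(D)$ were an injection; since $\mathrm{Seq}(D) = \bigcup_n D^{(n)}$ where $D^{(n)}$ is the (Dedekind-finite, in fact finite-if-$D$-is-but-we-only-need-Dedekind-finite) set of injective $n$-sequences, and a countable union of Dedekind-finite sets that is linearly orderable... — actually the clean route is: the set of injective sequences from $D$ is linearly ordered by length-then-some-ordering induced from any partial structure, but without choice we must be careful. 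The genuinely safe path is to invoke the known $\ZF$ theorem directly. Then the Dedekind-finiteness of $\mathrm{Seq}(D)$ gives $\abs{X} < \abs{Y}$, in particular $\abs{X} \ne \abs{Y}$, completing the proof. One should double-check the edge case that $\mathrm{Seq}(D)$ is genuinely infinite (so that ``$D$ a Dedekind \emph{set}'', meaning infinite Dedekind-finite, is what makes the example nontrivial) — but infiniteness of $\mathrm{Seq}(D)$ is clear since it contains sequences of every finite length and $D$ is nonempty.

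Finally, I would remark that this construction is essentially tight with the phenomenon being illustrated: it shows the asymmetry between $\le$ and $\le^*$ is already visible in any model with a Dedekind set, and in particular that one cannot hope to prove ``$\abs{X} \le \abs{Y}$ and $\abs{Y} \le^* \abs{X}$ imply $\abs{X} = \abs{Y}$'' in $\ZF$ alone. If a self-contained proof of Dedekind-finiteness of $\mathrm{Seq}(D)$ is wanted rather than a citation, I would isolate it as a preliminary lemma before this theorem.
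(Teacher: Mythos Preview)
Your construction is exactly the paper's: same $X$ and $Y$ (nonempty vs.\ all finite injective sequences from the Dedekind set), same inclusion for $\leq$, same ``delete first element'' surjection for $\leq^*$, and the same reduction of $\abs{X}\neq\abs{Y}$ to the Dedekind-finiteness of $\mathrm{Seq}(D)$. The only difference is that where you defer that last fact to a citation, the paper proves it directly: given a sequence $(y_k)$ of distinct injective finite sequences, concatenate them while deleting already-seen elements to build an injection $\bN\to D$, using that any finite subset of $D$ supports only finitely many injective sequences.

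Two small corrections. First, your parenthetical ``equivalently, one could use all finite sequences'' is false: if $\abs{D}\geq 2$ then $D^{<\omega}$ contains a copy of $2^{<\omega}$ and is therefore Dedekind-infinite, so injectivity of the sequences is essential to the argument. Second, your aborted sketch via ``countable union of Dedekind-finite sets'' or a linear ordering is indeed a dead end in $\ZF$; the concatenation argument above is the correct self-contained route, and it is short enough that you might as well include it rather than cite.
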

\begin{proof} Fix a Dedekind set $S$. Let $Y$ be the set of all finite one-to-one sequences of elements in $S$ and $X$ be the set of all non-empty such sequences. $\abs{X} \leq \abs{Y}$ is witnessed by the inclusion map of $X$ to $Y$, and $\abs{Y} \leq^* \abs{X}$ is witnessed by the ``delete first element'' surjection from $X$ onto $Y$. So it remains only to prove that $|X|\neq|Y|$.

Suppose for a contradiction that $\abs{X}=\abs{Y}$. Since $X\subsetneq Y$, it follows that $Y$ is Dedekind-infinite. So, there is a sequence $(y_k)_{k\in\bN}$ of distinct elements of $Y$, i.e. distinct finite sequences from $S$.



We now construct a one-to-one sequence $\Sigma$, which enumerates some of the elements of $S$. We then show that $\Sigma$ must be infinite, contradicting the Dedekind-finiteness of $S$. Our construction of $\Sigma$ will be inductive, gradually producing longer and longer finite initial segments.

Denote by $\sigma_k$ the finite initial segment of $\Sigma$ produced after the $k$-th step of this construction, so that $\sigma_0 \subseteq \sigma_1 \subseteq \cdots \subseteq \Sigma$, as follows: Let $\sigma_0$ be $y_0$. Let $\hat{y}_{n+1}$ be defined by taking $y_{n+1}$ and deleting any of its elements that have previously appeared in $\sigma_n$. Then $\sigma_{n+1}$ is given by appending $\hat{y}_{n+1}$ to the end of $\sigma_n$. As a special case, if all the elements of $y_{n+1}$ have already appeared in $\sigma_n$, $\hat{y}_{n+1}$ is the empty sequence, which is appended to give $\sigma_{n+1} = \sigma_n$.

First note that the sequence $\Sigma$ must itself be one-to-one. This is because if there were a repeated element in $\Sigma$, then its second appearance could not have been in the same $\hat{y}_k$ as its first appearance, since each $y_k$ is one-to-one. So, it must have been in a later $\hat{y}_l$, but this is impossible, because its first appearance would cause it to be deleted from any later $\hat{y}_l$.

Finally, to show that $\Sigma$ is infinite, we will show that the sequence grows non-trivially at infinitely many steps, i.e. for each $k\in\bN$, there is $l\in\bN$ such that $\sigma_l\setminus\sigma_k$ is nonempty. Suppose that at stage $k$, $\sigma_k$ has $n$ elements from $S$. There are only finitely many one-to-one sequences that use at most those $n$ elements. So, there is $l$ such that $\hat{y}_{k+l}$ is non-trivial, i.e. $\sigma_l\setminus\sigma_k$ is nonempty. So, $\Sigma$ is infinite, contradicting the Dedekind-finiteness of $S$. This contradiction proves that $\abs{Y} \neq \abs{X}$.




\end{proof}

\begin{corollary}\label{cor: Dual CSB fails in ZF} $\CSB^*$ is not necessarily true in ZF: There are sets $X$ and $Y$ such that $\abs{X} =^* \abs{Y}$ and $\abs{X}\neq \abs{Y}$.
\end{corollary}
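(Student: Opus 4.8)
The plan is to read this off directly from Theorem~\ref{thm: tarski no bijection} together with Lemma~\ref{prop: CSB etc.}, since the corollary is a non-provability statement: it suffices to exhibit one model of $\ZF$ in which $\CSB^*$ fails. First I would invoke the fact, already noted in the discussion of Dedekind sets above, that it is consistent with $\ZF$ that a Dedekind set exists (Cohen's model, \cite[Chapter 5]{Jech73}). Working inside such a model, Theorem~\ref{thm: tarski no bijection} supplies sets $X$ and $Y$ with $\abs{X}\leq\abs{Y}$, $\abs{Y}\leq^*\abs{X}$, and $\abs{X}\neq\abs{Y}$.

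The one small step beyond Theorem~\ref{thm: tarski no bijection} is to upgrade the injective inequality to a surjective one: by Lemma~\ref{prop: CSB etc.}(2), $\abs{X}\leq\abs{Y}$ implies $\abs{X}\leq^*\abs{Y}$. Combining this with the other inequality $\abs{Y}\leq^*\abs{X}$ gives $\abs{X}=^*\abs{Y}$ by the definition of $=^*$. Since nonetheless $\abs{X}\neq\abs{Y}$, the pair $(X,Y)$ witnesses the failure of $\CSB^*$ in this model, so $\CSB^*$ is not a theorem of $\ZF$.

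There is no real obstacle here; the only thing worth flagging is that the statement is a consistency/independence claim rather than an outright $\ZF$-theorem, so the proof must explicitly pass to a model containing a Dedekind set rather than simply asserting the existence of such $X,Y$ in $\ZF$ (in a model of $\AC$, $\CSB^*$ holds and no such pair exists). Everything else is a two-line bookkeeping combination of the previous theorem and Lemma~\ref{prop: CSB etc.}(2).
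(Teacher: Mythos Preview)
Your proposal is correct and matches the paper's own proof: both derive the corollary from Theorem~\ref{thm: tarski no bijection} by applying Lemma~\ref{prop: CSB etc.}(2) to upgrade the injective inequality $\abs{X}\leq\abs{Y}$ to $\abs{X}\leq^*\abs{Y}$, yielding $\abs{X}=^*\abs{Y}$ while $\abs{X}\neq\abs{Y}$. You are slightly more explicit than the paper about the metamathematical framing (passing to a model of $\ZF$ containing a Dedekind set), which is appropriate given that the corollary is a consistency claim, but the argument is otherwise identical.
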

\begin{proof} From Proposition \ref{prop: CSB etc.}, we know that $\abs{Y}\leq\abs{X}$ implies $\abs{Y}\leq^*\abs{X}$. The statement of Theorem \ref{thm: tarski no bijection} can then be translated into the above corollary.
\end{proof}

An alternative proof of Corollary \ref{cor: Dual CSB fails in ZF} proceeds via the known facts that $\CSB^*$ implies $\CC$ and that $\CC$ is independent of $\ZF$.

Corollary \ref{cor: Dual CSB fails in ZF} tells us that in $\ZF$, the existence of a bijection between two sets is strictly stronger than the existence of surjections in both directions. 

\section{Well-Foundedness} We define the following notions of well-foundedness that are all, a priori, distinct in $\ZF$. We use the notation $\WF_{yk}^x$, where the subscript $y \in \set{i,s,b}$ refers to the injective, surjective, or bijective comparison in the conclusion of the corresponding statement; and the superscript $x \in \set{i,s}$ indicates the same for the hypothesis. Moreover, the numerical subscripts $k \in \set{1,2,3,4}$ indicate the strength of the statement in terms of implication, i.e. the lower indexed statements imply the corresponding higher ones. Finally, the reason why we omit $\WF_{yk}^b$ will become obvious to the reader after reading the definitions below. 

\begin{definition}\label{def: well-foundedness} We formalize the notions of well-foundedness $\WF_{yk}^x$, by first considering the case of $\WF_{bk}^i$ as follows:
\begin{itemize}
    \item[$\WF_{b1}^i$] Given a non-empty set $X$ of cardinals, there is a $\kappa \in X$ such that for any $\lambda \in X$, if $\lambda \leq \kappa$ then $\kappa = \lambda$.
    \item[$\WF_{b2}^i$] If $(\kappa_i)$ is an infinite sequence of cardinals such that $\kappa_0\geq \kappa_1 \geq \kappa_2\geq\cdots$, then there is some $n\in\bN$ such that $\kappa_n = \kappa_{n+1}$.
    \item[$\WF_{b3}^i$] If $(A_i)$ is an infinite sequence of sets such that $\abs{A_0} \geq \abs{A_1} \geq \abs{A_2} \geq \cdots$, then there is some $n\in\bN$ such that $\abs{A_n} = \abs{A_{n+1}}$.
    \item[$\WF_{b4}^i$] Given $(A_i)$, an infinite sequence of sets, and $(f_i)$, a corresponding sequence of injections $f_i : A_{i+1} \to A_i$, there is some $n \in \bN$ such that $\abs{A_n} = \abs{A_{n+1}}$.
    \end{itemize}

    The remaining $\WF_{yk}^x$'s are defined similarly. For example,
    \begin{itemize}
        \item[$\WF_{s2}^s$] If $(\kappa_i)$ is an infinite sequence of cardinals such that $\kappa_0\geq^* \kappa_1 \geq^* \kappa_2\geq^* \cdots$, then there is some $n\in\bN$ such that $\kappa_n \leq^* \kappa_{n+1}$. 
        \item[$\WF_{i3}^i$] If $(A_i)$ is an infinite sequence of sets such that $\abs{A_0} \geq \abs{A_1} \geq \abs{A_2} \geq \cdots$, then there is some $n\in\bN$ such that $\abs{A_n} \leq \abs{A_{n+1}}$.
        \item[$\WF_{b4}^s$] Given $(A_i)$, an infinite sequence of sets, and $(f_i)$, a corresponding sequence of surjections $f_i : A_i \to A_{i+1}$, there is some $n \in \bN$ such that $\abs{A_n} = \abs{A_{n+1}}$.
    \end{itemize}
\end{definition}


\begin{proposition}\label{prop: up to down implications} Fix $x \in \set{i,s}$ and $y \in \set{i,s,b}$. For $j<k$ we have \[\WF_{yj}^x \limplies \WF_{yk}^x.\]
\begin{proof} We treat the case of 
$\WF_{bk}^i$ first. The proofs for the remaining $\WF_{yk}^x$'s can be done similarly.

The $\WF_{b2}^i \limplies \WF_{b3}^i \limplies \WF_{b4}^i$ implications are straightforward; more specifically, each subsequent statement has a stronger hypothesis with the same conclusion. We prove the most non-trivial implication $\WF_{b1}^i \limplies \WF_{b2}^i$ using the contrapositive. Suppose $(\kappa_i)$ were an infinite sequence of cardinals that violated $\WF_{b2}^i$, i.e. $\kappa_0 > \kappa_1 > \kappa_2 > \cdots$, then the set $X = \set{\kappa_i}$ violates $\WF_{b1}^i$. 
\end{proof}

\end{proposition}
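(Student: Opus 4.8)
The plan is to prove, for each fixed choice of $x\in\set{i,s}$ and $y\in\set{i,s,b}$, the three consecutive implications $\WF_{y1}^x \limplies \WF_{y2}^x \limplies \WF_{y3}^x \limplies \WF_{y4}^x$, and then to obtain the statement for arbitrary $j<k$ by transitivity of implication. The six possible pairs $(x,y)$ are all handled by one and the same argument, with ``$\le$'', ``$\le^*$'', or ``$=$'' inserted into the hypothesis according to $x$ and into the conclusion according to $y$; so I would write out the case $\WF_{bk}^i$ in detail and remark that the remaining cases are verbatim translations.

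For the two implications $\WF_{y2}^x \limplies \WF_{y3}^x$ and $\WF_{y3}^x \limplies \WF_{y4}^x$, the point is that at each step the hypothesis only becomes more special while the conclusion is unchanged, and that the passage stays in plain $\ZF$. Given a sequence of sets $(A_i)$ descending in the relevant ordering, the operation $A\mapsto\abs{A}$ is $\ZF$-definable, so it turns $(A_i)$ into a descending sequence of cardinals with no appeal to choice; applying $\WF_{y2}^x$ to that sequence yields an $n$ for which $\abs{A_n}$ and $\abs{A_{n+1}}$ stand in the prescribed relation, which is exactly the conclusion of $\WF_{y3}^x$. Similarly, a sequence of injections $f_i:A_{i+1}\to A_i$ (respectively surjections $f_i:A_i\to A_{i+1}$) is termwise a witness of $\abs{A_{i+1}}\le\abs{A_i}$ (respectively $\abs{A_i}\ge^*\abs{A_{i+1}}$), so the hypothesis of $\WF_{y3}^x$ is satisfied and its conclusion gives $\WF_{y4}^x$.

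The one implication with real content is $\WF_{y1}^x \limplies \WF_{y2}^x$, which I would prove by contraposition. Suppose $(\kappa_i)$ is a sequence of cardinals witnessing the failure of $\WF_{y2}^x$: for every $i$, $\kappa_{i+1}$ lies below $\kappa_i$ in the hypothesis ordering, and for no $n$ does $\kappa_n$ lie below $\kappa_{n+1}$ in the conclusion ordering. Let $X=\set{\kappa_i:i\in\bN}$, the image of $\bN$ under $i\mapsto\kappa_i$; this is a non-empty set of cardinals, formed without any choice. For any $\kappa_m\in X$, the element $\kappa_{m+1}\in X$ exhibits $\kappa_m$ as non-minimal in the sense of $\WF_{y1}^x$, since $\kappa_{m+1}$ lies below $\kappa_m$ in the hypothesis ordering yet $\kappa_m$ does not lie below it in the conclusion ordering; as $m$ was arbitrary, $X$ has no $\WF_{y1}^x$-minimal element, so $\WF_{y1}^x$ fails. (When $y\in\set{i,b}$ one may, if the chain form ``$\kappa_0>\kappa_1>\cdots$'' is preferred, invoke $\CSB$ --- Lemma \ref{prop: CSB etc.}(1) --- to reconcile the two standard meanings of ``$<$''.)

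I do not expect a genuine obstacle here. The only points needing care are the bookkeeping of the six instantiations, so that the two orderings are inserted consistently in hypothesis and conclusion, and the verification that each manipulation --- passing from a sequence of sets to the sequence of its cardinalities, and forming the set $X$ from the sequence $(\kappa_i)$ --- is carried out in $\ZF$ alone, which is the entire reason these statements are being separated.
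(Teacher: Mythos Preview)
Your proposal is correct and follows essentially the same route as the paper: you reduce to the chain $\WF_{y1}^x\limplies\WF_{y2}^x\limplies\WF_{y3}^x\limplies\WF_{y4}^x$, observe that the last two implications hold because the hypothesis only strengthens while the conclusion is unchanged, and prove the first by contraposition via $X=\set{\kappa_i:i\in\bN}$. Your write-up is merely more explicit than the paper's, in particular about the $\ZF$-definability of $A\mapsto\abs A$ and about the use of $\CSB$ to pass to the strict chain $\kappa_0>\kappa_1>\cdots$, which the paper uses tacitly.
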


\begin{proposition}
    The following are equivalent:
    \begin{enumerate}
        \item[(1)] $\WF_{i2}^i$, i.e. if $(\kappa_i)$ is an infinite sequence of cardinals such that $\kappa_0\geq \kappa_1 \geq \kappa_2\geq\cdots$, then there is some $n\in\bN$ such that $\kappa_n \leq \kappa_{n+1}$.
        \item[(2)] If $(\kappa_i)$ is an infinite sequence of cardinals such that $\kappa_0\geq \kappa_1 \geq \kappa_2\geq\cdots$, then there is some $n\in\bN$ such that for every $m\geq n$, $\kappa_m\leq \kappa_{m+1}$.
    \end{enumerate}
\end{proposition}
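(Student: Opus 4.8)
The implication $(2)\implies(1)$ is immediate: the index $n$ supplied by $(2)$ already satisfies $\kappa_n\le\kappa_{n+1}$, so it witnesses the conclusion of $(1)=\WF_{i2}^i$. The content is the converse, which I would prove by contraposition.

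So suppose $(\kappa_i)$ is an infinite $\ge$-decreasing sequence of cardinals for which $(2)$ fails; I want to produce a sequence violating $\WF_{i2}^i$. Failure of $(2)$ unpacks to: for every $n\in\bN$ there is some $m\ge n$ with $\kappa_m\nleq\kappa_{m+1}$. Since $\kappa_{m+1}\le\kappa_m$ holds for all $m$ by hypothesis, Lemma~\ref{prop: CSB etc.}(1) ($\CSB$) lets me rewrite $\kappa_m\nleq\kappa_{m+1}$ as $\kappa_m\ne\kappa_{m+1}$, i.e. $\kappa_m>\kappa_{m+1}$. Hence the set $D=\set{m\in\bN:\kappa_m>\kappa_{m+1}}$ is cofinal in $\bN$, so it is infinite; let $m_0<m_1<m_2<\cdots$ be its increasing enumeration. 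It is important that no choice principle intervenes here: $D$ is a definable subset of $\bN$ (defined from the given sequence), and the increasing enumeration of an infinite subset of $\bN$ is canonical.

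I would then set $\mu_j=\kappa_{m_j}$ and check that $(\mu_j)_{j\in\bN}$ is strictly $\ge$-decreasing. For each $j$ we have $m_j+1\le m_{j+1}$, so monotonicity of $(\kappa_i)$ gives $\kappa_{m_j+1}\ge\kappa_{m_{j+1}}$; combining this with $\kappa_{m_j}>\kappa_{m_j+1}$ (which holds since $m_j\in D$) yields $\mu_j=\kappa_{m_j}>\kappa_{m_{j+1}}=\mu_{j+1}$. Thus $(\mu_j)$ is a $\ge$-decreasing sequence of cardinals with $\mu_j\ne\mu_{j+1}$, hence $\mu_j\nleq\mu_{j+1}$ (again by $\CSB$), for every $j$. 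This contradicts $\WF_{i2}^i$, completing the contrapositive.

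The proof is short, so there is no serious obstacle; the only two points needing care are (a) carrying out the extraction of the strictly decreasing subsequence without invoking any form of choice — which is why I single out the canonical increasing enumeration of the definable set $D$ rather than making successive arbitrary selections — and (b) the repeated routine use of $\CSB$ to pass between ``$\kappa\nleq\lambda$'' and ``$\kappa\ne\lambda$'' under the standing assumption $\lambda\le\kappa$.
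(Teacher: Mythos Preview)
Your proof is correct and follows essentially the same contrapositive route as the paper: both extract a strictly decreasing subsequence by singling out the indices at which a strict drop occurs and then verifying that the resulting subsequence violates $\WF_{i2}^i$. If anything, your version is tidier --- you explicitly note that the enumeration of $D$ is canonical (so no choice is used) and flag the appeals to $\CSB$, points the paper leaves implicit.
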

\begin{proof} The $(2) \implies (1)$ direction is trivial. For the $(1) \implies (2)$ direction, suppose $(\kappa_i)$ is a sequence as in the hypothesis of $(2)$, and that for every $n \in \bN$, there is some $m \geq n$ such that $\kappa_m > \kappa_{m+1}$.\footnote{Note that to get this ``$\forall n, \exists m \geq n$" statement from the hypothesis in $(2)$, which is only ``$\exists n$," we apply $(2)$ to an appropriate tail of the given sequence.} Now construct the sequence $(\lambda_i)$ as follows: Taking $n=0$, there is some $m\geq n$ such that $\kappa_m > \kappa_{m+1}$. Set $\lambda_0 = \kappa_m$ and $\lambda_1 = \kappa_{m+1}$. Now taking $n'=m+1$, there is some $m'\geq n'$ such that $\kappa_{m'} > \kappa_{m'+1}$. Pick the least such $m'$ and set $\lambda_2 = \kappa_{m'+1}$, and so on. So, for each $n\in \bN$, $\lambda_n > \lambda_{n+1}$. Thus, the sequence $(\lambda_i)$ violates $(1)$, completing our proof.  
\end{proof}
The analogous results can be proven similarly for the remaining $\WF_{xk}^x$'s, for $k>1$. 

\begin{proposition}
    The following are equivalent:
    \begin{enumerate}
        \item[(1)] $\WF_{b4}^i$, i.e. given $(A_i)$, an infinite sequence of sets, and $(f_i)$, a corresponding sequence of injections $f_i : A_{i+1} \to A_i$, there is some $n \in \bN$ such that $\abs{A_n} = \abs{A_{n+1}}$.
        \item[(2)] Given an infinite nested sequence of sets $B_0 \supseteq B_1 \supseteq B_2 \supseteq \cdots$, there is some $n \in \bN$ such that $\abs{B_n} = \abs{B_{n+1}}$.
    \end{enumerate}
\end{proposition}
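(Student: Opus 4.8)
The plan is to prove the two implications separately; both are short, with $(1)\Rightarrow(2)$ being essentially immediate.

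For $(1)\Rightarrow(2)$, suppose $B_0 \supseteq B_1 \supseteq B_2 \supseteq \cdots$ is an infinite nested sequence of sets. I would apply $\WF_{b4}^i$ to the sequence $(A_i)=(B_i)$ together with the inclusion maps $f_i : B_{i+1} \hookrightarrow B_i$, each of which is an injection. The $n \in \bN$ thereby obtained satisfies $\abs{A_n}=\abs{A_{n+1}}$, which is exactly the conclusion of $(2)$ for the sequence $(B_i)$.

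For $(2)\Rightarrow(1)$, suppose $(A_i)$ is an infinite sequence of sets and $f_i : A_{i+1} \to A_i$ are injections. I would define, by recursion on $i$, the composite maps $g_i : A_i \to A_0$ by $g_0 = \id_{A_0}$ and $g_{i+1} = g_i \circ f_i$. Each $g_i$ is an injection, being a composition of injections, and this recursion uses no choice, since the sequence $(f_i)$ is given to us. Set $B_i = g_i[A_i] \subseteq A_0$. Then $(B_i)$ is nested, because $B_{i+1} = g_i\bigl[f_i[A_{i+1}]\bigr] \subseteq g_i[A_i] = B_i$ using $f_i[A_{i+1}] \subseteq A_i$; and $\abs{B_i} = \abs{A_i}$, because $g_i$ restricts to a bijection of $A_i$ onto its image $B_i$. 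Applying $(2)$ to the nested sequence $(B_i)$ yields some $n \in \bN$ with $\abs{B_n} = \abs{B_{n+1}}$, and then $\abs{A_n} = \abs{B_n} = \abs{B_{n+1}} = \abs{A_{n+1}}$, as required.

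I do not expect a genuine obstacle here; the one point deserving a sentence of care is the legitimacy, in $\ZF$, of forming the composite injections $g_i$ and their images $B_i$ — this is unproblematic since each $g_i$ and $B_i$ is canonically determined by the given data $(A_i)$ and $(f_i)$, so no choices are made along the way.
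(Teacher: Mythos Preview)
Your proposal is correct and follows essentially the same argument as the paper: inclusion maps for $(1)\Rightarrow(2)$, and the recursively defined composites $g_i=g_{i-1}\circ f_{i-1}$ with $B_i=g_i[A_i]$ for $(2)\Rightarrow(1)$. Your extra remark that the construction of the $g_i$ and $B_i$ requires no choice is a welcome clarification not made explicit in the paper.
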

\begin{proof} 

For the $(1) \implies (2)$ direction, suppose $(B_i)$ is as in $(2)$, with $B_0 \supseteq B_1 \supseteq B_2 \supseteq \cdots$. Taking $A_i = B_i$, and $f_i : A_{i+1} \to A_i$ to be the inclusion map, by (1), there is an $n \in \bN$ such that $\abs{B_n} = \abs{A_n} = \abs{A_{n+1}} = \abs{B_{n+1}}$.

For the $(2) \implies (1)$ direction, suppose $(f_i : A_{i+1} \to A_i)$ is a sequence of injections as in $(1)$. We define a sequence $g_i : A_i \to A_0$, by setting $g_0 = \id_{A_0}$ and $g_{n+1} = g_n \circ f_n$. Then, taking $B_n = g_n(A_n)$ gives us a sequence $B_0 \supseteq B_1 \supseteq B_2 \supseteq \cdots$. Since each $f_i$ is injective, for each $n\in\bN, \abs{B_n} = \abs{A_n}$. It follows from $(2)$ that there is $n \in \bN$ such that $\abs{B_n}=\abs{B_{n+1}}$; so, for this $n$, $\abs{A_n} = \abs{A_{n+1}}$, completing our proof. 
\end{proof}
The analogous results can be proven similarly for the remaining $\WF_{y4}^x$'s. Note that in the case of surjective inequalities, ``analogous" means that $B_{n+1}$ is the quotient of $B_n$ by some equivalence relation.
\section{Cardinal Representatives}
We use the following results of Pincus from \cite{Pin74} to indicate why $\WF^x_{y2}$ might, for all we know, be different from $\WF^x_{y3}$. More specifically, it follows from Theorem \ref{thm: no card rep} that in $\ZF$, $\WF^x_{y2}$ can't be deduced from $\WF^x_{y3}$ by just ``choosing a representative" for each cardinal.

Let $C$ be the class of cardinal numbers defined in the standard way. In \cite{Pin74}, Pincus defines a class of \textit{cardinal representatives} $R$ as a class of sets such that for each $\mu \in C$, there is a unique $x \in R$ satisfying $\abs{x} = \mu$. In this regard, he proves the following.

\begin{theorem}[Theorem 1.1 of \cite{Pin74}]\label{thm: no card rep} It is consistent with $\ZF$ that no class of cardinal representatives exists.
\end{theorem}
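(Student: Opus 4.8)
The statement is Pincus's Theorem 1.1 from \cite{Pin74}, so the plan is really to sketch how such a consistency result is obtained rather than to reprove it from scratch. The natural approach is via a permutation model (a model of $\ZFA$, set theory with atoms) followed by a transfer argument (the Jech–Sochor embedding theorem, or Pincus's own transfer machinery) to pass from $\ZFA$ to $\ZF$. First I would set up a suitable permutation model: take a set $A$ of atoms carrying a rich homogeneous structure — for instance, infinitely many mutually indiscernible copies of some small structure, acted on by a group $G$ of automorphisms — and let the model $\mathcal{N}$ consist of the hereditarily symmetric sets with respect to a normal filter of subgroups generated by stabilizers of finite (or suitably small) subsets of $A$. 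The point of the construction is to arrange that for some cardinal $\mu$ (realized, say, by an orbit-like set of atoms), the collection of sets $x$ with $\abs{x} = \mu$ forms a proper class on which $G$ acts without any symmetric choice function; i.e., no single $x$ of size $\mu$ can be pinned down by a finite support.

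The key steps, in order: (i) verify that the structure on $A$ is chosen so that the relevant cardinal $\mu$ has many "conjugate" representatives that are permuted transitively (or at least without fixed points) by automorphisms fixing any given finite support; (ii) show that a class $R$ of cardinal representatives, being a \emph{class} definable with parameters, would in particular single out one representative $x_\mu$ of $\mu$, and that $x_\mu$ — being definable from set parameters — must have a finite support; (iii) derive a contradiction by applying an automorphism in $G$ that fixes the support of $x_\mu$ but moves $x_\mu$ to a different set of the same cardinality, contradicting uniqueness of the representative. This shows no class of cardinal representatives exists in $\mathcal{N}$. Then (iv) invoke the appropriate transfer theorem to obtain a model of full $\ZF$ with the same failure; here one must check that "there is no class of cardinal representatives" is a statement of the right syntactic form (it is $\Pi$ over the universe, asserting that every candidate class fails), so that the embedding/transfer preserves its truth — this is exactly the kind of statement Pincus's transfer results are designed to handle, since it is injectively boundable in his sense.

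The main obstacle is step (i) together with the class-versus-set subtlety in step (ii): one must design the atoms and the group so that the obstruction to choosing representatives is genuinely \emph{global} — it is not enough that some particular cardinal has no symmetric representative if a clever class function could still work around it — and one must be careful that a class $R$ is allowed arbitrary parameters. The standard resolution is to make the homogeneity so strong (e.g. the automorphism group acts highly transitively on the atoms, and the "bad" cardinals come in a proper class of mutually symmetric copies) that \emph{any} set of parameters has a support missing enough of the structure to leave some bad cardinal's representatives permutable. Since the full argument is carried out in \cite{Pin74}, I would cite that paper for the detailed verification of homogeneity and for the transfer step, and merely indicate the permutation-model skeleton above as motivation for how $\WF^x_{y2}$ and $\WF^x_{y3}$ can fail to be linked by a naive "choose a representative" argument.
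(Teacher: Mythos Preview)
The paper does not give its own proof of this theorem; it merely records the result as Theorem~1.1 of \cite{Pin74} and uses it as a black box to motivate the distinction between $\WF^x_{y2}$ and $\WF^x_{y3}$. Your proposal therefore goes beyond what the paper actually does, sketching a permutation-model-plus-transfer argument in the standard style. That sketch is reasonable in outline, though the point that deserves the most care --- and which you flag yourself --- is that ``no class of cardinal representatives exists'' is a schema over all formulas with parameters rather than a single first-order sentence, so ordinary Jech--Sochor transfer does not apply directly; Pincus's own machinery (and the ordinal-definability apparatus $\mathsf{DR}$, $\mathsf{Df}$ that the paper introduces immediately after the theorem statement) is what handles this. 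But for purposes of comparison: there is nothing in the paper to compare against, since the authors simply cite Pincus without reproducing or sketching his argument.
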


Pincus describes the class $\mathsf{DR}$ of cardinals $\mu$ such that $\mu = \abs{x}$ for some ordinal definable $x$ (see \cite{MS71}). $\mathsf{DR}$ then has a definable class of representatives: represent $\mu$ by the $x$ of least definition\footnote{i.e. the first $x$ in the standard well-ordering of the ordinal definable sets} satisfying $\abs{x} = \mu$. Furthermore, $\mathsf{Df}$ is defined as the class of cardinals $\mu$ satisfying $\aleph_0 \not\leq \mu$, i.e. $\mathsf{Df}$ is the class of Dedekind-finite cardinals. Using this, Pincus proves the following theorem, which not only implies Theorem \ref{thm: no card rep} -- and hence that $\WF^x_{y2}$ can't be deduced from $\WF^x_{y3}$ by just choosing representatives for each cardinal -- but also implies that representatives can't even be chosen for the ``obvious" decreasing sequence that starts with a Dedekind cardinal and subtracts natural numbers.

\begin{theorem}[Theorem 3 of \cite{Pin74}] It is consistent with $\ZF$ that there exists $\mu \in \mathsf{DR} \cap \mathsf{Df}$ such that $\set{\mu - n}_{n \in \bN}$ has no class of cardinal representatives.
\end{theorem}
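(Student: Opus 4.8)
The plan is to produce a symmetric extension witnessing the statement. The natural candidate is the basic Cohen model $N$ --- Cohen's model of \cite[Chapter 5]{Jech73} --- in which one adjoins a countably infinite set $A=\set{a_n:n\in\bN}$ of mutually generic Cohen reals and keeps the hereditarily symmetric sets for the group of all permutations of $A$ under the finite-support filter. Write $\mu=\abs{A}^N$. As recorded earlier, $A$ is infinite yet Dedekind-finite in $N$, so $\mu\in\mathsf{Df}$; and it is well known that $A$ is ordinal-definable in $N$ (its canonical symmetric name is fixed by the whole group, and $A$ is recoverable from the ambient structure), so $\mu\in\mathsf{DR}$.

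I would next reduce the conclusion to a statement about sequences. Since $\mu$ is infinite and Dedekind-finite, deleting distinct finite numbers of points yields distinct cardinals --- using the characterization that a proper subset of a Dedekind-finite set has strictly smaller cardinality, together with the fact that subsets of Dedekind-finite sets are Dedekind-finite --- so $\set{\mu-n}_{n\in\bN}$ is an honestly countable set of cardinals. By Replacement, a class (equivalently here, a set) of cardinal representatives for it is precisely a sequence $(x_n)_{n\in\bN}\in N$ with $\abs{x_n}=\mu-n$ for every $n$. So it suffices to prove that $N$ contains no such sequence. Assume one exists; as an element of $N$ it has a finite support $E\subseteq A$, hence so does every $x_n$. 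Let $k=\abs{E}$.

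The crux is the following claim about $N$: \emph{if $Z\in N$ is infinite, has support $E$, and injects into $A$, then $\abs{Z}=\mu-j$ for some $j\le k$.} Granting it, apply the claim to $Z=x_{k+1}$ (which is infinite and injects into $A$, since $\mu-(k+1)\leq\mu$); we get $\mu-(k+1)=\mu-j$ with $j\le k$, contradicting distinctness. To prove the claim I would first note that every infinite $B\subseteq A$ with $B\in N$ is cofinite in $A$: if $E_B$ is a support of $B$ and $a\in B\setminus E_B$, then the $\mathrm{fix}(E_B)$-orbit of $a$ is all of $A\setminus E_B$, which is therefore contained in $B$. Consequently any infinite $Z$ injecting into $A$ satisfies $\abs{Z}=\mu-n$ for a unique $n$; and when $Z$ is literally a subset of $A$, its minimal support is exactly its finite complement, giving $n=\abs{A\setminus Z}\le\abs{E}=k$. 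The genuine difficulty --- and the main obstacle --- is to upgrade this from subsets of $A$ to sets merely injecting into $A$: a witnessing injection $g:Z\hookrightarrow A$ carries its own, possibly much larger, support $E_g\supseteq E$, so one cannot directly read $n=\abs{A\setminus g[Z]}$ off of $E$; one must argue, via the homogeneity of Cohen forcing over the submodel generated by $E$ together with a swapping-of-coordinates argument, that $n$ is already controlled by $E$ and not by $E_g$. This is the heart of Pincus's argument.

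If the bare basic Cohen model proves awkward for that last step, a safe fallback is to engineer the symmetric model around the claim --- choosing the generic object $A$, or the acting group and filter, so that every infinite set injecting into $A$ is, modulo a bijection definable from a support, a cofinite subset of $A$ --- which makes the key claim hold essentially by construction while keeping $A$ Dedekind-finite and ordinal-definable.
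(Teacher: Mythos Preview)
The paper does not prove this theorem; it is quoted from \cite{Pin74} without argument, so your proposal must stand on its own. The architecture is right---exhibit a model with an ordinal-definable infinite Dedekind-finite set $A$, reduce a class of representatives for $\set{\mu-n}_{n\in\bN}$ to a sequence $(x_n)$ with $\abs{x_n}=\mu-n$, and contradict the finite support $E$ of such a sequence---but two steps are not secured. The assertion that $A$ is ordinal-definable in the bare Cohen model is not the triviality you suggest: having a symmetric name fixed by the whole group gives $A$ empty \emph{support}, not an internal OD definition, and candidate descriptions such as ``the Cohen reals over $L$'' or ``a minimal Dedekind-finite generating set'' do not single out $A$ (for instance $a_0\oplus a_1$ is also Cohen over $L$, and $\set{a_0}\cup\set{a_0\oplus a_n:n\ge1}$ is another minimal generating set of the same cardinality). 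It is safer to argue first in the basic Fraenkel model of $\ZFA$, where the set of atoms is trivially definable, and then transfer---which is closer to Pincus's own method.

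More seriously, your ``key claim'' that every infinite $Z$ with support $E$ injecting into $A$ satisfies $\abs{Z}=\mu-j$ with $j\le\abs{E}$ is the entire content of the theorem, and you have not proved it. Your observation that a permutation $\pi$ fixing $E$ carries a witnessing injection $g$ to another with image $A\setminus\pi(F)$ shows only that the deficit $j=\abs{F}$ is well-defined, not that it is bounded by $\abs{E}$; the appeal to ``homogeneity \ldots and swapping of coordinates'' does not close this gap as stated, and the fallback of ``engineering the model'' is not a plan. In the Fraenkel model the claim \emph{can} be established by analyzing the $\fix(E)$-orbits on $Z$ and using Dedekind-finite cardinal arithmetic to bound both how many atoms outside $E$ can lie in the support of an element of $Z$ and how many infinite orbits can occur, but that is a genuine argument you still owe.
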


\section{Another Countable Choice Principle}
We introduce an equivalent formulation of $\CC$, which we will use in subsequent sections (particularly sections \ref{sec: filling the gaps} and \ref{sec: additional assumptions}).

\begin{theorem}\label{thm: CC equivalence} The following are equivalent:
\begin{enumerate}
    \item[(1)] $\CC$, i.e. choice from countably infinite families;
    \item[(2)] Given countable sequences of non-empty sets $(A_n)$ and $(B_n)$, and a countable sequence of injections $(f_n : A_n \to B_n)$, there exists a countable sequence of surjections $(g_n : B_n \to A_n)$;
    \item[(3)] Given countable sequences of non-empty sets $(A_n)$ and $(B_n)$, and a countable sequence of injections $(f_n : A_n \to B_n)$, there exists a countable sequence of maps $(h_n : B_n \to A_n)$.
\end{enumerate}
\end{theorem}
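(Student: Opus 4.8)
The plan is to prove the cycle $(1) \Rightarrow (2) \Rightarrow (3) \Rightarrow (1)$. The implication $(2) \Rightarrow (3)$ is immediate, since every surjection $g_n : B_n \to A_n$ is in particular a map $B_n \to A_n$, so the sequence produced by $(2)$ already witnesses $(3)$. For $(1) \Rightarrow (2)$, assume $\CC$ and suppose we are given $(A_n)$, $(B_n)$ and injections $(f_n : A_n \to B_n)$ with all $A_n, B_n$ nonempty. For each $n$, the set $S_n$ of surjections $B_n \to A_n$ is nonempty: by Lemma~\ref{prop: CSB etc.}(2), since $A_n \neq \es$ and $A_n$ injects into $B_n$, there is a surjection $B_n \to A_n$. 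Applying $\CC$ to the countable family $(S_n)$ yields a choice function, i.e. a sequence $(g_n : B_n \to A_n)$ of surjections, as required.

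The substantive direction is $(3) \Rightarrow (1)$, and this is where the work lies. Given a countable family $(X_n)$ of nonempty sets, we must produce a choice function using only the hypothesis of $(3)$. The idea is to cook up sequences $(A_n)$, $(B_n)$ and injections $f_n : A_n \to B_n$ so that \emph{any} sequence of maps $h_n : B_n \to A_n$ — not necessarily surjective, not necessarily related to $f_n$ — encodes enough information to select an element of each $X_n$. A natural attempt: take $B_n = X_n$ and $A_n = \{*\}$ a fixed singleton, with $f_n$ the unique (injective) map; but then $h_n : X_n \to \{*\}$ carries no information. So instead one wants the \emph{domain} of $h_n$ to be small and the \emph{codomain} to "contain" $X_n$. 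Set $A_n = \{*\}$ again but now take $B_n$ to be something from which a map out of a singleton picks an element of $X_n$ — that is, reverse the roles: let $A_n = X_n$, $B_n = $ something into which $X_n$ injects, say $B_n = \mathcal{P}(X_n)$ via $x \mapsto \{x\}$, with $f_n : X_n \to \mathcal{P}(X_n)$ this singleton injection. Then $(3)$ gives maps $h_n : \mathcal{P}(X_n) \to X_n$, and $h_n(X_n) \in X_n$ is a canonical choice from $X_n$ (using $X_n \in \mathcal{P}(X_n)$ since $X_n \neq \es$). This requires no choice beyond the single application of $(3)$, so we obtain a choice function for $(X_n)$, establishing $\CC$.

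The one point to check carefully is that the construction of the data $(A_n), (B_n), (f_n)$ for the $(3) \Rightarrow (1)$ argument is itself choice-free: the assignment $n \mapsto (X_n,\ \mathcal{P}(X_n),\ \text{the singleton map } x \mapsto \{x\})$ is given by an explicit class function of $X_n$, so the sequence of triples exists in $\ZF$ by replacement, with no selection required. Likewise $\mathcal{P}(X_n)$ is nonempty and $X_n$ is nonempty, so all the nonemptiness hypotheses of $(3)$ are met, and $f_n$ is genuinely injective since $\{x\} = \{x'\}$ implies $x = x'$. The main obstacle is really just the right choice of gadget in this last implication — once one sees that $(3)$ should be fed a singleton injection $X_n \hookrightarrow \mathcal{P}(X_n)$ so that the backward map evaluated at the distinguished point $X_n$ returns an element of $X_n$, the rest is routine. (One could equally well use any canonical superset of $X_n$ with a distinguished point not depending on choices, e.g. $X_n \cup \{X_n\}$ with distinguished point $X_n$ and $f_n$ the inclusion; the power set version is perhaps cleanest.)
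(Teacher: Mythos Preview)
Your proof is correct and follows essentially the same cycle $(1)\Rightarrow(2)\Rightarrow(3)\Rightarrow(1)$ as the paper. The only cosmetic differences are that in $(1)\Rightarrow(2)$ the paper applies $\CC$ to the sets $A_n$ themselves (choosing basepoints $a_n$ and then uniformly building each $g_n$ from $f_n$ and $a_n$) rather than to the sets of surjections, and in $(3)\Rightarrow(1)$ the paper uses $B_n = A_n \cup \{z\}$ with $z = \emptyset$ and the inclusion map---precisely the alternative gadget you mention at the end.
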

\begin{proof}
Assume $(1)$ and the hypothesis of $(2)$. Since each $A_n$ is non-empty, we may choose $a_n \in A_n$, for each $n\in\bN$. Then applying the same construction as in Proposition \ref{prop: CSB etc.} (2) to each injection $f_n$, we may define a sequence of surjections $g_n : B_n \to A_n$ by taking $g_n\res_{f_n[A_n]} = f_n^{-1}$ and for any $y \in B_n \setminus f_n[A_n]$, $g_n(y) = a_n$.

The $(2)\implies (3)$ implication is trivial, since each surjection $g_n : B_n \to A_n$, is itself a map $B_n \to A_n$, i.e. set $h_n = g_n$.

Finally, suppose $(3)$ is true, and that we are given a countably infinite family $A_n$ of non-empty sets. Fix some arbitrary set $z$, for example $z = \es$. Let $B_n = A_n \cup \set{z}$ and $f_n : A_n \to B_n$ be the inclusion map (which would just be the identity map if $z \in A_n$). By assumption, there is a sequence of maps $(h_n : B_n \to A_n)_{n\in\bN}$. So, $(h_n(z))_{n\in\bN}$ is our desired sequence with $h_n(z) \in A_n$, completing our proof. 
\end{proof}

\section{Filling the gaps}\label{sec: filling the gaps}
Although results such as Theorem \ref{thm: CC equivalence} and Theorem \ref{thm: no card rep} are helpful in distinguishing different $\WF$ notions in $\ZF$ (specifically Theorem \ref{thm: CC equivalence} $(ii)$, which is not provable in $\ZF$, is needed for $\WF_{i4}^s\limplies \WF_{i4}^i$ and $\WF_{s4}^s\limplies \WF_{s4}^i$, and Theorem \ref{thm: no card rep} argues that it is not necessarily the case that $\WF^x_{y3} \limplies \WF^x_{y2}$) there are some implications that are immediately true in $\ZF$.

\begin{proposition} In $\ZF$, we immediately have the following implications. \label{prop: full figure}

\[\begin{tikzcd}
	& {\WF_{b1}^i} && {\WF_{i1}^i} \\
	& {\WF_{b2}^i} && {\WF_{i2}^i} \\
	& {\WF_{b3}^i} && {\WF_{i3}^i} \\
	{\WF_{s1}^i} & {\WF_{b4}^i} && {\WF_{i4}^i} & {\WF_{i1}^s} & {\WF_{b1}^s} \\
	{\WF_{s2}^i} &&&& {\WF_{i2}^s} & {\WF_{b2}^s} \\
	{\WF_{s3}^i} &&&& {\WF_{i3}^s} & {\WF_{b3}^s} \\
	{\WF_{s4}^i} && {\WF_{s1}^s} && {\WF_{i4}^s} & {\WF_{b4}^s} \\
	&& {\WF_{s2}^s} \\
	&& {\WF_{s3}^s} \\
	&& {\WF_{s4}^s}
	\arrow[from=1-4, to=2-4]
	\arrow[from=2-4, to=3-4]
	\arrow[from=3-4, to=4-4]
	\arrow[from=7-3, to=8-3]
	\arrow[from=8-3, to=9-3]
	\arrow[from=9-3, to=10-3]
	\arrow[from=4-1, to=5-1]
	\arrow[from=5-1, to=6-1]
	\arrow[from=6-1, to=7-1]
	\arrow[from=7-3, to=4-1]
	\arrow[from=8-3, to=5-1]
	\arrow[from=9-3, to=6-1]
	\arrow[from=4-5, to=5-5]
	\arrow[from=6-5, to=7-5]
	\arrow[from=4-5, to=1-4]
	\arrow[from=5-5, to=2-4]
	\arrow[from=6-5, to=3-4]
	\arrow[from=4-5, to=7-3]
	\arrow[from=5-5, to=8-3]
	\arrow[from=6-5, to=9-3]
	\arrow[from=7-5, to=10-3]
	\arrow[from=4-6, to=4-5]
	\arrow[from=5-6, to=5-5]
	\arrow[from=6-6, to=6-5]
	\arrow[from=7-6, to=7-5]
	\arrow[from=4-6, to=5-6]
	\arrow[from=5-6, to=6-6]
	\arrow[from=6-6, to=7-6]
	\arrow[from=5-5, to=6-5]
	\arrow[from=1-2, to=4-1]
	\arrow[from=3-2, to=6-1]
	\arrow[from=4-2, to=7-1]
	\arrow[from=1-2, to=1-4]
	\arrow[from=2-2, to=2-4]
	\arrow[from=3-2, to=3-4]
	\arrow[from=4-2, to=4-4]
	\arrow[from=1-4, to=1-2]
	\arrow[from=2-4, to=2-2]
	\arrow[from=3-4, to=3-2]
	\arrow[from=4-4, to=4-2]
	\arrow[from=1-2, to=2-2]
	\arrow[from=2-2, to=3-2]
	\arrow[from=3-2, to=4-2]
	\arrow[from=2-2, to=5-1]
\end{tikzcd}\]
\end{proposition}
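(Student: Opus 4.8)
The plan is to verify that every arrow in the diagram is a theorem of $\ZF$ by reducing all of them to four elementary ``moves''. \textbf{Move (A), raising the index $k$:} for fixed $x\in\{i,s\}$ and $y\in\{i,s,b\}$ one has $\WF_{y1}^x\limplies\WF_{y2}^x\limplies\WF_{y3}^x\limplies\WF_{y4}^x$; this is exactly Proposition \ref{prop: up to down implications} and accounts for every vertical arrow inside each of the six towers of the figure.

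\textbf{Move (B), weakening the conclusion ($y$ from $b$ to $i$ to $s$), with $x,k$ fixed:} here the two statements being compared have literally the same hypothesis, so only the conclusions must be compared. A bijection is an injection, so ``$\abs{A_n}=\abs{A_{n+1}}$'' entails ``$\abs{A_n}\leq\abs{A_{n+1}}$'' (and, in the $k=1$ format, ``$\kappa=\lambda$'' entails ``$\kappa\leq\lambda$''), which gives $\WF_{bk}^x\limplies\WF_{ik}^x$; and by Lemma \ref{prop: CSB etc.}(2), ``$\abs{A_n}\leq\abs{A_{n+1}}$'' entails ``$\abs{A_n}\leq^*\abs{A_{n+1}}$'', which gives $\WF_{ik}^x\limplies\WF_{sk}^x$. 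Applying (B) twice yields $\WF_{bk}^x\limplies\WF_{sk}^x$, the form in which several of these arrows are drawn.

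\textbf{Move (C), weakening the hypothesis ($x$ from $s$ to $i$), for $k\leq3$ only:} for $k\leq3$ the hypothesis of $\WF_{yk}^i$ is a chain of injective inequalities $\abs{A_0}\geq\abs{A_1}\geq\cdots$ (or, for $k=1$, the clause ``$\lambda\leq\kappa$''), and by Lemma \ref{prop: CSB etc.}(2) each term upgrades to the corresponding surjective inequality, so the hypothesis of $\WF_{yk}^i$ implies that of $\WF_{yk}^s$ while the conclusions coincide; hence $\WF_{yk}^s\limplies\WF_{yk}^i$ for $k\in\{1,2,3\}$. \textbf{Move (D), strengthening the conclusion via $\CSB$, for $x=i$:} assuming $\WF_{ik}^i$, in the situation of $\WF_{bk}^i$ (a decreasing injective chain, resp.\ the $k=1$ clause) we already have $\abs{A_{n+1}}\leq\abs{A_n}$, while the $\WF_{ik}^i$-conclusion provides some $n$ with $\abs{A_n}\leq\abs{A_{n+1}}$, so $\CSB$ (Lemma \ref{prop: CSB etc.}(1)) gives $\abs{A_n}=\abs{A_{n+1}}$, i.e.\ $\WF_{ik}^i\limplies\WF_{bk}^i$; combined with the matching instance of (B) this produces the two-headed arrows $\WF_{ik}^i\liff\WF_{bk}^i$ on the left of the figure.

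To finish I would just walk down the arrow list and tag each arrow: every vertical arrow inside a tower is (A); every arrow $\WF_{bk}^x\to\WF_{ik}^x$ or $\WF_{ik}^x\to\WF_{sk}^x$ is a single application of (B), and every $\WF_{bk}^x\to\WF_{sk}^x$ arrow is (B) applied twice; every arrow of the form $\WF_{yk}^s\to\WF_{yk}^i$ appearing in the figure is (C) (and a glance shows that all of these have $k\leq3$); and every $\WF_{ik}^i\to\WF_{bk}^i$ is (D). None of this is hard; the one thing to be careful about — and the reason the diagram has the asymmetric shape it does — is the scope of moves (C) and (D). Move (C) genuinely fails at $k=4$, because there the hypothesis is an actual sequence of maps, and converting a sequence of injections into a sequence of surjections is precisely the choice principle of Theorem \ref{thm: CC equivalence}, which is not available in $\ZF$; that is why no $\WF_{y4}^s\to\WF_{y4}^i$ arrow is drawn and why that implication is deferred to Section \ref{sec: filling the gaps}. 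Move (D) has no analogue for $x=s$, because upgrading $\leq^*$ to $=$ is $\CSB^*$, which fails in $\ZF$ by Corollary \ref{cor: Dual CSB fails in ZF}; that is why, on the right-hand ($x=s$) side of the figure, the $b$-to-$i$ arrows cannot be reversed.
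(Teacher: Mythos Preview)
Your proof is correct and takes essentially the same approach as the paper: the paper reduces all non-vertical arrows to the four principles [i] $\CSB$, [ii] ``injection one way $\Rightarrow$ surjection the other'' (Lemma \ref{prop: CSB etc.}(2)), [iii] ``every bijection is an injection'', and [iv] ``every bijection is a surjection'', and handles the vertical arrows via Proposition \ref{prop: up to down implications}; your Moves (A)--(D) are a repackaging of the same ingredients (with [iv] absorbed into your ``(B) twice''). Your additional commentary on why (C) is restricted to $k\leq 3$ and why (D) has no $x=s$ analogue is accurate and matches observations the paper makes elsewhere.
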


\begin{proof} The up-to-down implications are covered in Proposition \ref{prop: up to down implications}. Each of the remaining implications in the figure is a consequence of one or more of the following results: [i] $\CSB$, [ii] The existence of an injection one way implies the existence of a surjection the other way (Proposition \ref{prop: CSB etc.}), [iii] Every bijection is itself an injection, and [iv] Every bijection is itself a surjection.

Choose any $k \in \set{1,2,3,4}, l\in \set{1,2,3}$. 
\begin{itemize}
    \item[$\bullet$] [iii] gives $\WF_{bk}^s\limplies \WF_{ik}^s$;
    \item[$\bullet$] [ii] gives $\WF_{ik}^s\limplies \WF_{sk}^s$, $\WF_{il}^s\limplies \WF_{il}^i$, and $\WF_{sl}^s\limplies \WF_{sl}^i$;
    \item[$\bullet$] [i] and [iii] give $\WF_{bk}^i\liff \WF_{ik}^i$;
    \item[$\bullet$] [iv] gives $\WF_{bk}^i\limplies \WF_{sk}^i$.
\end{itemize}  
\end{proof}

\section{Surjective Well-foundedness implies $\CSB^*$}
In this section we prove that $\WF^s_{b4}$ implies $\CSB^*$. We also indicate some consequences of this result.

\begin{theorem}\label{thm: surj WF implies Dual CSB}
    $\WF_{b4}^s \implies \CSB^*$.
\end{theorem}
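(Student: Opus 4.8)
The plan is to prove the contrapositive: assume $\CSB^*$ fails, and build a counterexample to $\WF_{b4}^s$, i.e.\ an infinite sequence of sets $(A_i)$ together with surjections $f_i : A_i \to A_{i+1}$ such that $\abs{A_n} \neq \abs{A_{n+1}}$ for every $n$. If $\CSB^*$ fails, we have non-empty sets $X$ and $Y$ with surjections $g : X \to Y$ and $h : Y \to X$ but $\abs{X} \neq \abs{Y}$. The obvious first move is to form the alternating sequence $A_0 = X$, $A_1 = Y$, $A_2 = X$, $A_3 = Y, \dots$, with $f_{2k} = g$ and $f_{2k+1} = h$. This gives a sequence of surjections, but it does not immediately contradict $\WF_{b4}^s$, because $\abs{A_0} = \abs{A_2} = \abs{X}$, so the conclusion ``$\abs{A_n} = \abs{A_{n+1}}$ for some $n$'' is \emph{not} what fails — what we actually need is $\abs{A_n} \neq \abs{A_{n+1}}$ for all $n$, i.e.\ $\abs{X} \neq \abs{Y}$, which we have. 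Wait — so in fact the naive alternating sequence already works, provided $\abs{X} \neq \abs{Y}$: every consecutive pair is $\{X,Y\}$ in some order, and those have different cardinality. So the first thing to check carefully is whether this truly suffices, and I believe it does: the sequence $(A_i)$ with $A_i = X$ for $i$ even, $A_i = Y$ for $i$ odd, and the $f_i$ alternating between $g$ and $h$, is a sequence of surjections with $\abs{A_n}\neq\abs{A_{n+1}}$ for all $n$, directly negating $\WF_{b4}^s$.

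So the real content of the theorem must be extracting the \emph{right} failure of $\CSB^*$. The subtlety is this: $\CSB^*$ as stated requires $X,Y$ \emph{non-empty}. If $\CSB^*$ fails, a priori the witnessing sets might run into the empty-set edge case somewhere — but since $g:X\to Y$ and $h:Y\to X$ are surjections with $X,Y$ both non-empty, there is no such issue, and the alternating construction produces genuinely non-empty $A_i$ with honest surjections $f_i$. Thus I expect the proof to be short: assume $\neg\CSB^*$, fix the witnesses $X,Y,g,h$, build the $2$-periodic sequence, and observe it violates $\WF_{b4}^s$. The one place to be slightly careful is to confirm that we need no choice to define the sequence $(A_i)$ and $(f_i)$: the definition is explicitly periodic (defined by the parity of the index), so there is a single uniform finite rule, no choices made along the way — this is important since the whole paper is about tracking choice.

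Concretely, the steps in order: (1) State we argue contrapositively. (2) Suppose $\neg\CSB^*$; obtain non-empty $X,Y$ and surjections $g:X\to Y$, $h:Y\to X$ with $\abs{X}\neq\abs{Y}$. (3) Define $A_i = X$ if $i$ is even, $A_i = Y$ if $i$ is odd; define $f_i = g$ if $i$ is even, $f_i = h$ if $i$ is odd, so $f_i : A_i \to A_{i+1}$ is a surjection in each case. (4) For every $n$, $\{A_n, A_{n+1}\} = \{X, Y\}$, hence $\abs{A_n} \neq \abs{A_{n+1}}$. (5) Conclude $(A_i),(f_i)$ witness $\neg\WF_{b4}^s$; contrapositively, $\WF_{b4}^s \implies \CSB^*$.

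I do not anticipate a genuine obstacle here — the "hard part" is really just making sure the periodic construction is legitimately choice-free and that the non-emptiness hypotheses in $\CSB^*$ line up with the (implicit) non-triviality needed for the $A_i$ and $f_i$. If anything, the thing to double-check is whether the intended statement of $\WF_{b4}^s$ tacitly wants the $A_i$ distinct or the sequence non-eventually-constant in some stronger sense; from Definition~\ref{def: well-foundedness} it does not, so the $2$-periodic example is admissible. One could also remark afterwards (as a lead-in to Corollary~\ref{cor: surj WF consequences}) that the same argument shows $\WF_{b4}^s$ already fails in any model of $\neg\CSB^*$, e.g.\ any model with a Dedekind set, via Corollary~\ref{cor: Dual CSB fails in ZF}.
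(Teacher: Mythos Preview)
Your proposal is correct and takes essentially the same approach as the paper: both use the $2$-periodic alternating sequence $X,Y,X,Y,\dots$ with surjections $g,h,g,h,\dots$. The only difference is that the paper argues directly (assume $\WF_{b4}^s$ and the hypothesis of $\CSB^*$, apply $\WF_{b4}^s$ to the alternating sequence to obtain $\abs{X}=\abs{Y}$) while you phrase it contrapositively, which is a cosmetic distinction.
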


\begin{proof} Assume $\WF_{b4}^s$, and suppose $X$ and $Y$ are (non-empty) sets with surjections $g: X \to Y$ and $h: Y \to X$. Define $(A_i)$ and $(f_i : A_i \to A_{i+1})$ as follows: For any $k \in \bN$, let $A_{2k} = X$ and $A_{2k+1} = Y$. Let $(f_i : A_i \to A_{i+1})$ be associated sequence of surjections given by $f_{2k} = g$ and $f_{2k+1} = h$. By $\WF_{b4}^s$, there is some $n \in \bN$ such that $\abs{A_n} = \abs{A_{n+1}}$, i.e. $\abs{X} = \abs{Y}$. 
\end{proof}

\begin{corollary} $\WF_{b4}^s$ implies the following Choice principles. \label{cor: surj WF consequences}
\begin{enumerate}
    \item Weak Partition Principle or $\mathsf{WPP}$: If $\abs{X} \leq^* \abs{Y}$, then $\abs{Y} \not< \abs{X}$ \cite[Form 100]{HR98}.
    \item Choice for well-ordered families of non-empty sets or $\ACWO$ \cite[Form 40]{HR98}
    \item $\DC$ \cite[Form 43]{HR98}
\end{enumerate}
\end{corollary}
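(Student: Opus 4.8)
The plan is to derive each of the three consequences from $\CSB^*$ together with the Tarski machinery already available in the paper, rather than directly from $\WF_{b4}^s$; since Theorem~\ref{thm: surj WF implies Dual CSB} gives $\WF_{b4}^s \implies \CSB^*$, it suffices to show $\CSB^* \implies \WPP$, $\CSB^* \implies \ACWO$, and $\CSB^* \implies \DC$. For part (1), I would argue contrapositively: suppose $\abs{X} \leq^* \abs{Y}$ but $\abs{Y} < \abs{X}$. Then $\abs{Y} \leq \abs{X}$, so by Lemma~\ref{prop: CSB etc.}(2) also $\abs{Y} \leq^* \abs{X}$; combined with $\abs{X} \leq^* \abs{Y}$ this gives $\abs{X} =^* \abs{Y}$, whence $\CSB^*$ forces $\abs{X} = \abs{Y}$, contradicting $\abs{Y} < \abs{X}$. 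So $\WPP$ follows quickly.

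For parts (2) and (3), the natural route is to invoke known implications in the Howard--Rubin framework: it is a classical fact (and one can cite \cite{HR98}) that $\CSB^*$ (Form 168) implies $\CC$ (Form 8), and in fact implies $\DC$ (Form 43) — indeed the relevant arrow among these forms is recorded in the literature. Since $\DC$ implies $\CC$, and since $\DC$ straightforwardly implies $\ACWO$ by transfinite recursion along the well-order (at a successor stage pick an element of the next set using the dependent/recursive choice, at limit stages there is nothing to do because we are building a single choice function set-by-set along an ordinal), establishing $\CSB^* \implies \DC$ does the bulk of the work for both (2) and (3). The cleanest self-contained path to $\CSB^* \implies \DC$ mimics Tarski's construction from Theorem~\ref{thm: tarski no bijection}: given a binary relation $R$ on a nonempty set $S$ with no $R$-maximal-type obstruction (i.e.\ every element has an $R$-successor) but no infinite $R$-chain, one builds two sets of finite $R$-paths whose cardinalities are $=^*$-equal but, by the failure of $\DC$, not equal, contradicting $\CSB^*$; alternatively one simply cites that $168 \Rightarrow 43$ in the Howard--Rubin database.

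The main obstacle is part (3): proving $\CSB^* \implies \DC$ from scratch requires more than the slick argument used for $\WPP$, since one must convert a would-be failure of $\DC$ into a genuine pair of sets that are $=^*$-equivalent but not equinumerous, and controlling the ``not equinumerous'' half is exactly the delicate counting argument (finitely many one-to-one sequences on a fixed finite support) that appears in the proof of Theorem~\ref{thm: tarski no bijection}. If we are willing to cite the Howard--Rubin tables for the implications $168 \Rightarrow 43 \Rightarrow 40$ and $168 \Rightarrow 8$, the corollary becomes almost immediate; if we want a self-contained treatment, the Tarski-style construction for $\DC$ is the step that needs real care, and everything else (deriving $\WPP$ directly, and $\ACWO$ from $\DC$) is routine.
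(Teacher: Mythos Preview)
Your argument for part (1) is fine and matches the paper: $\CSB^* \implies \WPP$ is immediate from Lemma~\ref{prop: CSB etc.}(2) and the definition of $=^*$. The difficulty is with parts (2) and (3), where your route is blocked by a reversed implication.

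You claim that $\DC$ ``straightforwardly implies $\ACWO$ by transfinite recursion along the well-order.'' This is false: $\ACWO$ (Form~40) is strictly stronger than $\DC$ (Form~43), and the correct implication is $\ACWO \implies \DC$, not the other way around. Dependent Choice only produces $\omega$-sequences of choices; it gives you no handle on a well-ordered family of length, say, $\omega_1$. In your sketched recursion, the phrase ``at a successor stage pick an element'' hides exactly the choice that $\DC$ cannot supply beyond countable length, and ``at limit stages there is nothing to do'' is misleading: the issue is not what happens at limits but that the recursion never gets past stage $\omega$ without further choice. Consequently your chain $\CSB^* \Rightarrow \DC \Rightarrow \ACWO$ (your ``$168 \Rightarrow 43 \Rightarrow 40$'') breaks at the second arrow, and you have no proof of (2).

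The paper's route runs in the opposite order: from $\CSB^*$ one gets $\WPP$ (your argument), then invokes Higasikawa's theorem \cite{Hig95} that $\WPP \implies \ACWO$, and finally the classical implication $\ACWO \implies \DC$ (Jensen; see \cite[Theorem~8.2]{Jech73}). So $\ACWO$ is established \emph{before} $\DC$, not after. Your Tarski-style sketch for $\CSB^* \implies \DC$ directly is plausible in spirit (and something like it underlies the known fact that $\CSB^* \implies \CC$), but even if it works it still leaves $\ACWO$ unproved; the missing ingredient is precisely the nontrivial Higasikawa step $\WPP \implies \ACWO$.
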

\begin{proof} Since $\WF_{b4}^s$ implies $\CSB^*$ (Theorem \ref{thm: surj WF implies Dual CSB}) and $\CSB^*$ clearly implies $\WPP$, it follows that $\WF_{b4}^s$ implies $\WPP$. It was shown by Higasikawa that $\WPP$ implies $\ACWO$ (see \cite{Hig95}), hence $\WF_{b4}^s$ implies $\ACWO$. Finally, since $\ACWO$ implies $\DC$  (\cite{Jensen67}; see \cite[Theorem 8.2]{Jech73} for the proof), we have $\WF_{b4}^s$ implies $\DC$.
\end{proof}

Further consequences of Theorem \ref{thm: surj WF implies Dual CSB} and Corollary \ref{cor: surj WF consequences} can be found in \cite[Part V.]{HR98} and \cite[Appendix 2]{Moo82}.

\section{Additional Assumptions}\label{sec: additional assumptions}
In this section we focus on how Figure \ref{prop: full figure} collapses with the additional assumptions of $\CC$, $\DC$, $\AC$, $\CSB^*$, or $\PP$, over $\ZF$. Since we have already established the equivalence $\WF_{bk}^i \liff \WF_{ik}^i$, we omit the $\WF_{ik}^i$'s in this section.

\begin{proposition}[$\ZF + \CC$]\label{prop: ZF+CC1} Assuming $\CC$, we have that for any $x \in \set{i,s}, y\in \set{i,s,b}$,
\[ \WF^x_{y2} \liff \WF^x_{y3} \liff \WF^x_{y4}. \]
\end{proposition}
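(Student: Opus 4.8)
The plan is to prove $\WF^x_{y4} \limplies \WF^x_{y3} \limplies \WF^x_{y2}$ using $\CC$, since the reverse implications $\WF^x_{y2} \limplies \WF^x_{y3} \limplies \WF^x_{y4}$ are already free in $\ZF$ by Proposition~\ref{prop: up to down implications}. I will treat the representative case of the injective hypothesis/injective conclusion family, i.e. $\WF^i_{i4} \liff \WF^i_{i3} \liff \WF^i_{i2}$; the bijective and surjective variants go through with the cosmetic changes noted in the excerpt (for surjective comparisons, ``choosing a section'' is replaced by ``choosing a preimage/quotient map,'' and Theorem~\ref{thm: CC equivalence}(2) supplies the needed sequence of surjections from a sequence of injections when one wants to cross between the $\geq$ and $\geq^*$ worlds, though within a fixed $(x,y)$ pair that crossing is not needed).

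\textbf{Step 1: $\WF^x_{y3} \limplies \WF^x_{y2}$.} Given a descending sequence of cardinals $\kappa_0 \geq \kappa_1 \geq \cdots$ witnessing the hypothesis of $\WF^i_{i2}$, I need an actual sequence of sets realizing these cardinals. Here is where $\CC$ enters: for each $i$, the collection of sets $A$ with $\abs{A} = \kappa_i$ is non-empty (by the definition of cardinal, whichever coding is used — Scott's trick gives a canonical non-empty set here, so in fact this particular step may not even need $\CC$ depending on the coding, but I will invoke $\CC$ to be safe), so by $\CC$ pick $A_i$ with $\abs{A_i} = \kappa_i$. Then $\abs{A_0} \geq \abs{A_1} \geq \cdots$, so $\WF^i_{i3}$ yields $n$ with $\abs{A_n} \leq \abs{A_{n+1}}$, i.e. $\kappa_n \leq \kappa_{n+1}$. (If the Scott-trick coding is used, the representatives are already canonical and this step is outright provable in $\ZF$; I will remark on this.)

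\textbf{Step 2: $\WF^x_{y4} \limplies \WF^x_{y3}$.} Given $(A_i)$ with $\abs{A_0} \geq \abs{A_1} \geq \cdots$, I must manufacture an actual sequence of injections $f_i : A_{i+1} \to A_i$. For each $i$ the set $\Inj(A_{i+1}, A_i)$ of injections is non-empty by hypothesis, so by $\CC$ choose $f_i \in \Inj(A_{i+1}, A_i)$. Feeding $(A_i)$ and $(f_i)$ into $\WF^i_{i4}$ gives $n$ with $\abs{A_n} \leq \abs{A_{n+1}}$, which is exactly the conclusion of $\WF^i_{i3}$ for this sequence.

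\textbf{Main obstacle.} The substantive point — and the reason $\CC$ rather than nothing is invoked — is Step 2 (and the analogous $\WF^s_{y4}\limplies\WF^s_{y3}$ with surjections): from ``each pair of consecutive sets admits an injection'' one cannot in general, without choice, extract a \emph{single sequence} of such injections, because the family $(\Inj(A_{i+1},A_i))_{i\in\bN}$ need not have a choice function in $\ZF$ alone. Once $\CC$ supplies the sequence, everything is mechanical. I would close by stating that the bijective case is identical with $\Inj$ replaced by the set of bijections, and the surjective case ($x=s$, $y=s$) is identical with injections replaced by surjections and $\leq$ by $\leq^*$ throughout, using the quotient-by-an-equivalence-relation reformulation noted after the previous proposition; no cross-over between $\leq$ and $\leq^*$ is required within a fixed $(x,y)$, so Theorem~\ref{thm: CC equivalence}(2) is not needed \emph{here} (it is needed elsewhere, as the excerpt notes, for the horizontal implications $\WF^s_{i4}\limplies\WF^i_{i4}$).
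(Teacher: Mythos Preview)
Your argument is correct and matches the paper's proof essentially line for line: the forward implications are free in $\ZF$, and for the backward direction you use $\CC$ first to choose a sequence of representative sets for a descending sequence of cardinals ($\WF^x_{y3}\limplies\WF^x_{y2}$) and then to choose a sequence of witnessing injections/surjections for a descending sequence of sets ($\WF^x_{y4}\limplies\WF^x_{y3}$). The only cosmetic differences are that the paper phrases both steps as contrapositives and treats the $b$-conclusion case as its representative rather than the $i$-conclusion case; your aside about Scott's trick possibly rendering the $3\limplies 2$ step choice-free is a correct observation that the paper does not make.
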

\begin{proof} The left to right implications are provable in $\ZF$, as shown in Proposition \ref{prop: full figure}. For the right to left implications, we prove $\WF^i_{b4} \limplies \WF^i_{b3} \limplies \WF^i_{b2}$, and the remaining cases can be done similarly. More specifically, we show $\neg \WF^i_{b2} \limplies \neg \WF^i_{b3} \limplies \neg \WF^i_{b4}$.

Suppose $(\kappa_n)$ is a witness to the failure of $\WF^i_{b2}$, i.e. $\kappa_0 > \kappa_1 > \cdots$. Using $\CC$, we may choose a set $A_n$ of cardinality $\kappa_n$ for each $n$. So we have $\abs{A_0} > \abs{A_1} > \abs{A_2} >\cdots$, which means that $(A_n)$ is a witness to the failure of $\WF^i_{b3}$.

Now, suppose we are given $(A_n)$, a witness to the failure of $\WF^i_{b3}$, i.e. $\abs{A_0} > \abs{A_1} > \abs{A_2} >\cdots$. This means that for each $n$ there is an injection $A_{n+1} \to A_n$, i.e. there is a nonempty set of injections $I_n$ from $A_{n+1}$ to $A_n$. Using $\CC$, choose, for each $n$, an injection $f_n : A_{n+1} \to A_n$. The sequence $(A_n)$ along with the maps $(f_n : A_{n+1}\to A_n)$ violate $\WF^i_{b4}$.
\end{proof}

\begin{proposition}[$\ZF + \CC$]\label{prop: ZF+CC2} Assuming $\CC$, we have
\[\WF_{i4}^s\limplies \WF_{i4}^i \text{ and } \WF_{s4}^s\limplies \WF_{s4}^i,\]
\end{proposition}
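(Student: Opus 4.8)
The plan is to prove both implications at once and by contraposition, since the two arguments are literally the same except for whether the relevant comparison of cardinals is $\le$ or $\le^*$. So I would assume $\neg\WF_{i4}^i$ (resp. $\neg\WF_{s4}^i$) and derive $\neg\WF_{i4}^s$ (resp. $\neg\WF_{s4}^s$), working throughout in $\ZF + \CC$.

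First I would fix a witness to the failure: an infinite sequence of sets $(A_i)$ together with injections $f_i : A_{i+1} \to A_i$ such that $\abs{A_n} \not\le \abs{A_{n+1}}$ (resp. $\abs{A_n} \not\le^* \abs{A_{n+1}}$) for every $n$. The first step is to observe that every $A_i$ is non-empty: if $A_j = \emptyset$, then $\abs{A_j} \le \abs{A_{j+1}}$ via the empty map (resp. $\abs{A_j} \le^* \abs{A_{j+1}}$ holds directly from the definition, since $A_j = \emptyset$), contradicting the choice of witness at $n = j$.

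Next, with all the $A_i$ non-empty and the injections $f_i : A_{i+1} \to A_i$ in hand, I would apply Theorem \ref{thm: CC equivalence}(2) — which is exactly the consequence of $\CC$ needed here — with ``$A_n$'' taken to be $A_{n+1}$, ``$B_n$'' taken to be $A_n$, and ``$f_n$'' taken to be $f_i$. This yields a single sequence of surjections $g_i : A_i \to A_{i+1}$. The pair $\bigl((A_i),\,(g_i : A_i \to A_{i+1})\bigr)$ is then precisely the data quantified over in $\WF_{i4}^s$ (resp. $\WF_{s4}^s$), and since $\abs{A_n} \not\le \abs{A_{n+1}}$ (resp. $\abs{A_n} \not\le^* \abs{A_{n+1}}$) still holds for every $n$, this pair witnesses $\neg\WF_{i4}^s$ (resp. $\neg\WF_{s4}^s$), completing the contrapositive.

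The only delicate point, and the sole place $\CC$ is really used, is the passage from the individual $f_i$ to a single sequence $(g_i)$ of surjections: for each fixed $i$ one can manufacture a surjection $A_i \to A_{i+1}$ out of $f_i$ by the construction of Lemma \ref{prop: CSB etc.}(2) (invert $f_i$ on its image, extend by a constant map), but doing this uniformly in $i$ requires choosing a base point in each $A_{i+1}$, which is exactly what Theorem \ref{thm: CC equivalence} packages. I do not anticipate any genuine obstacle beyond this; the non-emptiness observation is needed only because Theorem \ref{thm: CC equivalence}(2) is stated for sequences of non-empty sets, and it is immediate from the failure of $\WF_{i4}^i$ (resp. $\WF_{s4}^i$).
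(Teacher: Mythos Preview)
Your proposal is correct and follows essentially the same approach as the paper: the paper's proof is a one-line appeal to Theorem~\ref{thm: CC equivalence}(2), and you have simply unpacked that appeal via the contrapositive, including the non-emptiness check needed to invoke the theorem. The only cosmetic difference is that the argument can be run directly rather than by contraposition (if some $A_j=\emptyset$ the conclusion is immediate; otherwise apply Theorem~\ref{thm: CC equivalence}(2) and then $\WF^s_{y4}$), but the content is identical.
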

\begin{proof}
    As stated before, $\WF_{i4}^s\limplies \WF_{i4}^i$ and $\WF_{s4}^s\limplies \WF_{s4}^i$, follow from Theorem \ref{thm: CC equivalence} (ii), which is equivalent to $\CC$.
\end{proof}

\begin{corollary}[$\ZF + \CC$]\label{cor: ZF+CC} In $\ZF + \CC$, Figure \ref{prop: full figure} collapses into the following.

\[\begin{tikzcd}
	& {\WF_{b1}^i} \\
	& {\WF_{b(2,3,4)}^i} \\
	{\WF_{s1}^i} && {\WF_{i1}^s} & {\WF_{b1}^s} \\
	{\WF_{s(2,3,4)}^i} && {\WF_{i(2,3,4)}^s} & {\WF_{b(2,3,4)}^s} \\
	& {\WF_{s1}^s} \\
	& {\WF_{s(2,3,4)}^s}
	\arrow[from=5-2, to=6-2]
	\arrow[from=3-1, to=4-1]
	\arrow[from=5-2, to=3-1]
	\arrow[from=6-2, to=4-1]
	\arrow[from=3-3, to=4-3]
	\arrow[from=3-3, to=5-2]
	\arrow[from=4-3, to=6-2]
	\arrow[from=3-4, to=3-3]
	\arrow[from=4-4, to=4-3]
	\arrow[from=3-4, to=4-4]
	\arrow[from=1-2, to=3-1]
	\arrow[from=1-2, to=2-2]
	\arrow[from=2-2, to=4-1]
	\arrow[from=3-3, to=1-2]
	\arrow[from=4-3, to=2-2]
\end{tikzcd}\]
\end{corollary}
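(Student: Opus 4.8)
The plan is to obtain the collapsed diagram by consolidating results already proved, not by a new argument. The ingredients are: Proposition \ref{prop: full figure}, which records all the $\ZF$-provable arrows and the equivalences $\WF_{bk}^i\liff\WF_{ik}^i$ (the latter being why this section omits, and the collapsed diagram lacks, the $\WF_{ik}^i$ nodes); Proposition \ref{prop: ZF+CC1}, which under $\CC$ merges the level-$2$, level-$3$ and level-$4$ nodes of each column; and Proposition \ref{prop: ZF+CC2}, which under $\CC$ adds $\WF_{s4}^s\limplies\WF_{s4}^i$ and $\WF_{i4}^s\limplies\WF_{i4}^i$.

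First I would perform the node identifications. After deleting the $\WF_{ik}^i$ column, the five columns $b^i$, $s^i$, $i^s$, $b^s$, $s^s$ remain; Proposition \ref{prop: ZF+CC1} collapses the three nodes at levels $2,3,4$ of each column into one, which I name $\WF_{b(2,3,4)}^i$, $\WF_{s(2,3,4)}^i$, $\WF_{i(2,3,4)}^s$, $\WF_{b(2,3,4)}^s$, $\WF_{s(2,3,4)}^s$, while the five level-$1$ nodes are untouched, yielding exactly the ten nodes in the statement. Next I would project every arrow of Proposition \ref{prop: full figure} through this quotient and check it lands on one of the fifteen displayed arrows: the within-column ``lower index implies higher index'' arrows all collapse onto the single arrow from the level-$1$ node to the merged node of that column (five vertical arrows); and each horizontal family of $\ZF$-arrows at matching indices, namely the instances of [i]--[iv] from Proposition \ref{prop: full figure} --- $\WF_{bk}^i\to\WF_{sk}^i$, $\WF_{ik}^s\to\WF_{ik}^i\liff\WF_{bk}^i$ (for $k\in\{1,2,3\}$), $\WF_{ik}^s\to\WF_{sk}^s$, $\WF_{sk}^s\to\WF_{sk}^i$ (for $k\in\{1,2,3\}$), $\WF_{bk}^s\to\WF_{ik}^s$ --- collapses onto one arrow between the corresponding nodes, including the surviving level-$1$ horizontal arrows. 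The only displayed arrows between two merged nodes whose level-$4$ instance is not already $\ZF$-provable are $\WF_{s(2,3,4)}^s\to\WF_{s(2,3,4)}^i$ and $\WF_{i(2,3,4)}^s\to\WF_{b(2,3,4)}^i$, and Proposition \ref{prop: ZF+CC2} supplies exactly those missing level-$4$ instances. Counting the arrows produced this way against the fifteen drawn finishes the argument.

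Since the content is entirely organizational, the hard part is making the bookkeeping exhaustive: I would want to be sure that projecting Proposition \ref{prop: full figure} drops no arrow and manufactures no spurious one, and that the identifications forced by Propositions \ref{prop: ZF+CC1} and \ref{prop: ZF+CC2} do not accidentally glue together two of the ten nodes meant to remain distinct. With so few nodes and arrows this is a finite inspection, and I anticipate no difficulty beyond being systematic about it.
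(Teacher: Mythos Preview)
Your proposal is correct and takes essentially the same approach as the paper: the paper's proof is the one-liner ``This follows from propositions \ref{prop: ZF+CC1} and \ref{prop: ZF+CC2},'' and you have simply spelled out the bookkeeping of how those two propositions, together with the $\ZF$-arrows of Proposition \ref{prop: full figure}, produce exactly the collapsed diagram. One minor remark: once Proposition \ref{prop: ZF+CC1} has merged levels $2,3,4$, the two arrows you flag as needing Proposition \ref{prop: ZF+CC2} are already forced by the $\ZF$-provable level-$2$ (or level-$3$) instances, so Proposition \ref{prop: ZF+CC2} is in fact redundant for this corollary --- though the paper cites it anyway.
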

\begin{proof} This follows from propositions \ref{prop: ZF+CC1} and \ref{prop: ZF+CC2}.
\end{proof}

\begin{lemma}[\cite{Spector80}; see Form 43 R of \cite{HR98}]\label{lem: Spector DC equiv} $\DC$ is equivalent to the statement ``any partial order $(A, <)$ with no infinite descending chains is well-founded, that is, for all $X$ such that $\emptyset \neq X \subseteq A$, $X$ has a minimal element." 
\end{lemma}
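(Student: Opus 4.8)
The plan is to prove the two implications separately; the forward direction is a one-line application of $\DC$, while the reverse direction carries the real content and uses the familiar ``tree of finite approximations'' construction.

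For $\DC \limplies$ (the displayed statement): let $(A,<)$ be a partial order with no infinite descending chain, and suppose toward a contradiction that some $X$ with $\es \neq X \subseteq A$ has no minimal element. Then for every $x \in X$ there is $y \in X$ with $y < x$; that is, the relation $R$ on $X$ given by $x\,R\,y \iff y < x$ satisfies the hypothesis of $\DC$ (and $X \neq \es$). Applying $\DC$ produces a sequence $(x_n)$ with $x_n\,R\,x_{n+1}$ for all $n$, i.e. $x_0 > x_1 > x_2 > \cdots$, an infinite descending chain in $A$ — contradiction.

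For (the displayed statement) $\limplies \DC$: let $S$ be a non-empty set and $R$ a binary relation on $S$ with $\forall s \in S\,\exists t \in S\,(s\,R\,t)$; we must produce an infinite $R$-sequence. Let $A$ be the set of all \emph{non-empty} finite sequences $(s_0,\dots,s_n)$ from $S$ with $s_i\,R\,s_{i+1}$ for every $i < n$, partially ordered by declaring $\sigma < \tau$ iff $\tau$ is a proper initial segment of $\sigma$ (so under $<$, longer sequences are smaller). Since $S \neq \es$, the length-$1$ sequences witness $A \neq \es$. First, $A$ has no minimal element: given $\sigma = (s_0,\dots,s_n) \in A$, totality of $R$ gives $t$ with $s_n\,R\,t$, and the one-step extension $(s_0,\dots,s_n,t)$ lies in $A$ and is strictly below $\sigma$. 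By the displayed statement applied with $X = A$, it follows that $(A,<)$ must contain an infinite descending chain $\sigma_0 > \sigma_1 > \sigma_2 > \cdots$, i.e. a strictly increasing tower $\sigma_0 \subsetneq \sigma_1 \subsetneq \sigma_2 \subsetneq \cdots$ of finite $R$-chains. Since the lengths are strictly increasing, hence unbounded, the union $\bigcup_k \sigma_k$ is a well-defined infinite sequence $(s_n)_{n\in\bN}$ in $S$, and $s_n\,R\,s_{n+1}$ for each $n$ because that relation already holds inside some $\sigma_k$. This is the required $\DC$ witness.

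The step needing care — and the only place the argument could go wrong — is the reverse direction: one must orient the order on $A$ by \emph{reverse} extension so that ``no infinite descending chain'' corresponds to ``no infinite branch,'' restrict to non-empty sequences so that every minimal element has a last coordinate available for extension, confirm that $<$ is a genuine (irreflexive, transitive, merely partial) order, and observe that a strictly increasing chain of finite sequences has unbounded lengths so that its union is truly infinite. Each of these is routine once noticed, but they must be assembled in the right order; everything else is immediate.
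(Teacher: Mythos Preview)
The paper does not supply its own proof of this lemma; it is quoted as a known result with references to Spector and to Howard--Rubin, so there is nothing to compare your argument against. Your proof is correct and is the standard one: the forward direction is the obvious application of $\DC$ to a subset without minimal element, and for the converse you build the tree of finite $R$-chains ordered by reverse extension, observe it has no minimal element, and extract an infinite branch from the resulting descending chain. The points you flag at the end (orientation of the order, excluding the empty sequence, unboundedness of lengths) are exactly the places where one could slip, and you handle them all correctly.
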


\begin{proposition}[$\ZF + \DC$]\label{prop: ZF+DC} Assuming $\DC$, for any $x \in \set{i,s}, y\in \set{i,s,b}$,
\[ \WF^x_{y1} \liff \WF^x_{y2} \liff \WF^x_{y3} \liff \WF^x_{y4}, \]
and so Figure \ref{cor: ZF+CC} further collapses into the following.
\[\begin{tikzcd}
	& {\WF_{b(1,2,3,4)}^i} \\
	{\WF_{s(1,2,3,4)}^i} && {\WF_{i(1,2,3,4)}^s} & {\WF_{b(1,2,3,4)}^s} \\
	& {\WF_{s(1,2,3,4)}^s}
	\arrow[from=3-2, to=2-1]
	\arrow[from=2-3, to=3-2]
	\arrow[from=2-4, to=2-3]
	\arrow[from=1-2, to=2-1]
	\arrow[from=2-3, to=1-2]
\end{tikzcd}\]
\end{proposition}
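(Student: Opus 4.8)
The displayed collapse will follow once we establish, for every $x\in\set{i,s}$ and $y\in\set{i,s,b}$, the single missing implication
\[
\WF_{y2}^x \limplies \WF_{y1}^x .
\]
Indeed, the reverse implications $\WF_{y1}^x\limplies\WF_{y2}^x$ are already contained in Proposition \ref{prop: up to down implications}, and the mergers $\WF_{y2}^x\liff\WF_{y3}^x\liff\WF_{y4}^x$ are available from Corollary \ref{cor: ZF+CC} together with the standard fact that $\DC$ implies $\CC$. Once $1$ joins the block $\{2,3,4\}$ in every row, rearranging the collapsed Figure \ref{cor: ZF+CC} into the displayed diagram is pure bookkeeping.

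To prove $\WF_{y2}^x \limplies \WF_{y1}^x$, I would argue the contrapositive using $\DC$ (equivalently, Lemma \ref{lem: Spector DC equiv}). Write $\preceq^{(i)}$ for $\leq$ and $\preceq^{(s)}$ for $\leq^*$, and read $\kappa\preceq^{(b)}\lambda$ as an abbreviation of $\kappa=\lambda$. Let $X$ be a non-empty set of cardinals witnessing the failure of $\WF_{y1}^x$; unwinding the definition, this says: for every $\kappa\in X$ there is $\lambda\in X$ with $\lambda\preceq^{(x)}\kappa$ but $\neg(\kappa\preceq^{(y)}\lambda)$. Hence the relation
\[
R=\set{(\kappa,\lambda)\in X\times X : \lambda\preceq^{(x)}\kappa \text{ and } \neg(\kappa\preceq^{(y)}\lambda)}
\]
has an $R$-successor above every point of the non-empty set $X$, so by $\DC$ there is a sequence $(\kappa_n)_{n\in\bN}$ in $X$ with $\kappa_n\mathbin{R}\kappa_{n+1}$ for all $n$. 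That is, $\kappa_0\succeq^{(x)}\kappa_1\succeq^{(x)}\kappa_2\succeq^{(x)}\cdots$ while $\neg(\kappa_n\preceq^{(y)}\kappa_{n+1})$ for every $n$. Reading off the cases, $(\kappa_n)$ is exactly a counterexample to $\WF_{y2}^x$: for $y=b$ it is a $\succeq^{(x)}$-decreasing sequence of cardinals with no two consecutive terms equal; for $y\in\set{i,s}$ it is a $\succeq^{(x)}$-decreasing sequence in which one never has $\kappa_n\preceq^{(y)}\kappa_{n+1}$. So $\WF_{y2}^x$ fails, as required. This single argument covers all six pairs $(x,y)$ uniformly (in particular it re-proves the injective--injective rows, which are anyway redundant given $\WF_{bk}^i\liff\WF_{ik}^i$).

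The proof is short, so I do not anticipate a serious obstacle; the two points needing care are (a) that $\DC$, not merely $\CC$, is what makes the relation $R$ thread into an infinite sequence — this is the whole reason the $\WF_{y1}^x$ statements are not known to follow from the $\WF_{y2}^x$ statements in $\ZF$ alone — and (b) that in the surjective-hypothesis cases $\leq^*$ is only a preorder, so the naive strict order ``$\lambda\leq^*\kappa$ and $\lambda\neq\kappa$'' need not be transitive and one cannot literally invoke the ``well-founded partial order'' phrasing of Lemma \ref{lem: Spector DC equiv} with it. The raw form of $\DC$ used above sidesteps this difficulty entirely, which is why I would run the argument in that form rather than through transitive closures.
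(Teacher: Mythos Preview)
Your proposal is correct and follows essentially the same route as the paper: reduce to the single missing implication $\WF_{y2}^x\limplies\WF_{y1}^x$ (using $\DC\Rightarrow\CC$ and Corollary~\ref{cor: ZF+CC} for the rest), then obtain that implication from $\DC$. The paper merely writes that this ``follows easily from Lemma~\ref{lem: Spector DC equiv}'' without further detail; your version spells out the contrapositive via the raw form of $\DC$, and your observation in~(b) that $\leq^*$ is only a preorder---so that the partial-order phrasing of Lemma~\ref{lem: Spector DC equiv} does not literally apply in the surjective-hypothesis rows---is a genuine point of care that the paper glosses over.
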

\begin{proof} Since $\DC$ implies $\CC$, the only additional implication is $\WF^x_{y2}\limplies\WF^x_{y1}$, which follows easily from Lemma \ref{lem: Spector DC equiv}.
\end{proof}

\begin{proposition}[$\ZF + \AC$] Assuming $\AC$, all the notions of well-foundedness defined in Definition \ref{def: well-foundedness} are provable, and therefore equivalent. That is, for any $x \in \set{i,s}, y\in \set{i,s,b}$, and $k,l\in\set{1,2,3,4}$,
\[ \WF^x_{yk} \liff \WF^x_{yl}. \]
\end{proposition}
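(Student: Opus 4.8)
The plan is to show that \emph{every} statement $\WF^x_{yk}$ of Definition \ref{def: well-foundedness} is outright provable from $\AC$; a family of provably true statements is trivially pairwise equivalent, so this yields the proposition. Two consequences of $\AC$ do all the work. First, every set is well-orderable, so every cardinal is an aleph and the class of cardinals is well-ordered by $\leq$. Second, $\AC$ splits any surjection, so $\abs{X}\leq^*\abs{Y}$ implies $\abs{X}\leq\abs{Y}$; together with Lemma \ref{prop: CSB etc.} and $\CSB$ this shows that, on cardinals, $\leq$ and $\leq^*$ coincide, and hence $=$ and $=^*$ coincide as well. So every hypothesis and conclusion occurring in Definition \ref{def: well-foundedness}, whatever kind of comparison it is phrased with, says the same thing about cardinals, and it suffices to treat the injective versions.

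I would then verify these directly. For $k=1$: a nonempty set $X$ of cardinals has a $\leq$-least element $\kappa$ since the cardinals are well-ordered, and for $\lambda\in X$ with $\lambda\leq\kappa$ minimality gives $\kappa\leq\lambda$, so $\kappa=\lambda$ by $\CSB$; this is $\WF^i_{b1}$, and, via the coincidence of the orderings, all the other $k=1$ statements. For $k=2$: a weakly $\geq$-decreasing sequence of cardinals is a weakly decreasing sequence of ordinals, hence eventually constant, so some $n$ has $\kappa_n=\kappa_{n+1}$. For $k=3,4$: given a sequence of sets $(A_i)$, possibly equipped with witnessing injections or surjections, each $\abs{A_i}$ is a well-defined aleph and the hypothesis makes $(\abs{A_i})_i$ weakly $\geq$-decreasing, so it is eventually constant and the required $n$ exists. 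Alternatively one can avoid the case analysis: prove $\WF^s_{b1}$ exactly as in the $k=1$ step and then read off every other node of the figure in Proposition \ref{prop: full figure} from it using Propositions \ref{prop: up to down implications} and \ref{prop: full figure}, since $\WF^s_{b1}$ has the weakest hypothesis and the strongest conclusion and sits at the top of that diagram.

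I expect no real obstacle. The one point worth spelling out is why the difficulty behind Theorem \ref{thm: no card rep} is irrelevant here: reducing the $k=3,4$ hypotheses to a genuine descending sequence of cardinals uses only the assignment $A_i\mapsto\abs{A_i}$, which is always available, rather than choosing set representatives for a prescribed sequence of cardinals. It is also worth noting that $\AC$ is genuinely used twice --- once for well-orderedness of the cardinals and once to turn surjections into injections --- so that both the $x=i$ and $x=s$ families, and both the $y=i$ and $y=s$ families, come out true.
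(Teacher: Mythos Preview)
Your argument is correct. The paper actually provides no proof for this proposition at all; it is stated and immediately followed by the next proposition, the authors evidently regarding it as self-evident once $\AC$ is assumed. Your write-up supplies exactly the standard justification they omit: under $\AC$ the cardinals are the alephs (so well-ordered), and $\PP$ holds, collapsing $\leq^*$ to $\leq$ and $=^*$ to $=$, after which every $\WF^x_{yk}$ is an instance of the well-orderedness of the ordinals. Your alternative route via $\WF^s_{b1}$ and the diagram of Proposition~\ref{prop: full figure} is also fine, and your remark about why Theorem~\ref{thm: no card rep} is no obstacle here is to the point.
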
 

\begin{proposition}[$\ZF + \CSB^*$] Assuming $\CSB^*$, for any $k\in\set{1,2,3,4}$,
\[ \WF^s_{sk} \limplies \WF^s_{ik} \limplies \WF^s_{bk}, \]
and so Figure \ref{prop: full figure} collapses into the following.


\[\begin{tikzcd}
	{\WF_{s1}^i} && {\WF_{b1}^i} && {\WF_{b1}^s} \\
	{\WF_{s2}^i} && {\WF_{b2}^i} && {\WF_{b2}^s} \\
	{\WF_{s3}^i} && {\WF_{b3}^i} && {\WF_{b3}^s} \\
	{\WF_{s4}^i} && {\WF_{b4}^i} && {\WF_{b4}^s}
	\arrow[from=1-1, to=2-1]
	\arrow[from=2-1, to=3-1]
	\arrow[from=3-1, to=4-1]
	\arrow[from=1-5, to=2-5]
	\arrow[from=2-5, to=3-5]
	\arrow[from=3-5, to=4-5]
	\arrow[from=1-3, to=1-1]
	\arrow[from=3-3, to=3-1]
	\arrow[from=4-3, to=4-1]
	\arrow[from=1-3, to=2-3]
	\arrow[from=2-3, to=3-3]
	\arrow[from=3-3, to=4-3]
	\arrow[from=2-3, to=2-1]
	\arrow[from=1-5, to=1-3]
	\arrow[from=2-5, to=2-3]
	\arrow[from=3-5, to=3-3]
\end{tikzcd}\]
\end{proposition}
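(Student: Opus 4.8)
The plan is to establish $\WF_{sk}^s \limplies \WF_{ik}^s$ and $\WF_{ik}^s \limplies \WF_{bk}^s$ uniformly in $k\in\set{1,2,3,4}$, and then read off the collapsed diagram. The point is that every superscript-$s$ statement comes with a hypothesis that already supplies a surjective comparison $\geq^*$ between exactly the cardinals whose comparison the conclusion asserts. The single fact doing all the work is: if $\mu\geq^*\nu$ and also $\mu\leq^*\nu$, then $\mu=^*\nu$, so $\CSB^*$ upgrades this to $\mu=\nu$; and if instead we only have $\mu\leq\nu$, then $\mu\leq^*\nu$ still follows by Lemma \ref{prop: CSB etc.}(2), so the same conclusion is reached.

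First I would dispatch the sequence cases $k\in\set{2,3,4}$. In each of them an instance of the hypothesis of $\WF_{yk}^s$ tells us that $\kappa_i\geq^*\kappa_{i+1}$ for every $i$: for $k=2$ this is the hypothesis verbatim; for $k=3$ it is the hypothesis read on $\abs{A_i}$; and for $k=4$ the surjection $f_i:A_i\to A_{i+1}$ witnesses $\abs{A_{i+1}}\leq^*\abs{A_i}$ straight from the definition of $\leq^*$. Applying $\WF_{sk}^s$ to such a sequence yields an $n$ with $\kappa_n\leq^*\kappa_{n+1}$; together with $\kappa_n\geq^*\kappa_{n+1}$ this gives $\kappa_n=^*\kappa_{n+1}$, so $\CSB^*$ gives $\kappa_n=\kappa_{n+1}$ — which is the conclusion of $\WF_{bk}^s$, hence a fortiori of $\WF_{ik}^s$. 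For the formally separate implication $\WF_{ik}^s\limplies\WF_{bk}^s$ I would run the identical computation, first converting the $\leq$ produced by $\WF_{ik}^s$ into a $\leq^*$ via Lemma \ref{prop: CSB etc.}(2) before invoking $\CSB^*$.

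The minimal-element case $k=1$ is the same argument with ``the index $n$'' replaced by ``the distinguished cardinal $\kappa$''. Given a non-empty set $X$ of cardinals, $\WF_{s1}^s$ supplies $\kappa\in X$ with the property that $\lambda\leq^*\kappa$ implies $\kappa\leq^*\lambda$ for every $\lambda\in X$; for any such $\lambda$ we then have $\kappa=^*\lambda$, hence $\kappa=\lambda$ by $\CSB^*$, which is exactly the property of $\kappa$ required by $\WF_{b1}^s$. As before, $\WF_{i1}^s$ sits between the two, its implication to $\WF_{b1}^s$ again using Lemma \ref{prop: CSB etc.}(2) to pass from $\leq$ to $\leq^*$.

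Finally, I would note that the displayed collapse is then immediate. The new implications $\WF_{sk}^s\limplies\WF_{ik}^s\limplies\WF_{bk}^s$, combined with the reverse implications $\WF_{bk}^s\limplies\WF_{ik}^s\limplies\WF_{sk}^s$ already available in $\ZF$ from Proposition \ref{prop: full figure}, make the three superscript-$s$ families mutually equivalent for each fixed $k$, collapsing the corresponding three columns of the full figure into one; together with the $\ZF$-equivalence $\WF_{bk}^i\liff\WF_{ik}^i$ this leaves precisely the pictured diagram. I do not expect a genuine obstacle here: the only spots needing a moment's care are the orientation bookkeeping in the $k=4$ case (confirming that the hypothesis surjections genuinely witness $\geq^*$ between consecutive terms) and the harmless observation that $\CSB^*$ holds vacuously when an empty set is involved; everything else is a routine application of $\CSB^*$ and Lemma \ref{prop: CSB etc.}.
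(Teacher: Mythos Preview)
Your argument is correct and is essentially the paper's own: combine the ambient $\geq^*$ from the superscript-$s$ hypothesis with the conclusion's $\leq^*$ (or $\leq$, upgraded via Lemma~\ref{prop: CSB etc.}(2)) to obtain $=^*$, and then apply $\CSB^*$. The paper states this only for $k=3$ and declares the remaining cases similar, whereas you spell out all four cases and both implications separately; the mathematical content is identical.
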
 
\begin{proof}
    We show this for the case of $k=3$, i.e. $\WF^s_{s3} \limplies \WF^s_{i3} \limplies \WF^s_{b3}$, and the remaining cases can be done similarly. Moreover, since the opposite implications are true in $\ZF$, it suffices to show that $\WF^s_{s3} \limplies \WF^s_{b3}$. Suppose $(A_i)$ is an infinite sequence of sets such that $\abs{A_0} \geq^* \abs{A_1} \geq^* \abs{A_2} \geq^* \cdots$. From $\WF^s_{s3}$, we have that for some $n\in\bN$, $\abs{A_n} \leq^* \abs{A_{n+1}}$, and then by $\CSB^*$, we have $\abs{A_n} = \abs{A_{n+1}}$, thus completing our proof. 
\end{proof}

\begin{proposition}[$\ZF + \PP$]\label{prop: ZF+PP}
Assuming $\PP$, for any $x,x' \in \set{i,s}, y,y'\in \set{i,s,b}$, and $k,k'\in\set{1,2,3,4}$,
\[ \WF^x_{yk} \liff \WF^{x'}_{y'k'} \]
and Figure \ref{prop: ZF+DC} further collapses into the following, and thus to a single point.
\[\begin{tikzcd}
	& {\WF_{b(1,2,3,4)}^i} \\
	{\WF_{s(1,2,3,4)}^i} && {\WF_{i(1,2,3,4)}^s} & {\WF_{b(1,2,3,4)}^s} \\
	& {\WF_{s(1,2,3,4)}^s}
	\arrow[from=3-2, to=2-1]
	\arrow[from=2-3, to=3-2]
	\arrow[from=2-4, to=2-3]
	\arrow[from=1-2, to=2-1]
	\arrow[from=2-3, to=1-2]
	\arrow[from=2-1, to=1-2]
	\arrow[from=1-2, to=2-3]
	\arrow[from=2-3, to=2-4]
	\arrow[from=3-2, to=2-3]
	\arrow[from=2-1, to=3-2]
\end{tikzcd}\]
\end{proposition}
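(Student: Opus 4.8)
The plan is to exploit two separate effects of $\PP$: it forces the two orderings of cardinals appearing in Definition \ref{def: well-foundedness} to coincide, and it is strong enough to recover $\DC$. Together these flatten the whole of Figure \ref{prop: full figure}.

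First I would record the basic consequence of $\PP$: combining the Partition Principle with part (2) of Lemma \ref{prop: CSB etc.} gives $\abs{X} \le \abs{Y} \iff \abs{X} \le^* \abs{Y}$ for all sets $X,Y$, and then, via $\CSB$, also $\abs{X} =^* \abs{Y} \iff \abs{X} = \abs{Y}$; in particular $\PP$ implies $\CSB^*$. Fixing an index $k$, this identification makes every superscript/subscript variant $\WF^x_{yk}$ essentially the same statement: the hypotheses ``$\ge$-descending'' and ``$\ge^*$-descending'' single out exactly the same sequences, since $\ge$ and $\ge^*$ are now the same relation; and the three possible conclusions ``$\abs{A_n} = \abs{A_{n+1}}$'', ``$\abs{A_n} \le \abs{A_{n+1}}$'', ``$\abs{A_n} \le^* \abs{A_{n+1}}$'' are equivalent, the last two collapsing to the first because a descending hypothesis supplies $\abs{A_{n+1}} \le \abs{A_n}$, whence $\CSB$ applies. (For $k=1$ there is no sequence, but the same remark applied to the clause ``$\lambda \le \kappa \Rightarrow \kappa = \lambda$'' and its surjective and bijective analogues gives the same outcome.) Hence, for each fixed $k$, all of $\WF^x_{yk}$ with $x\in\set{i,s}$ and $y\in\set{i,s,b}$ are equivalent.

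Next I would collapse the numerical index. From $\PP \limplies \CSB^* \limplies \WPP$ (the second implication being immediate: if $\abs{X}\le^*\abs{Y}$ and $\abs{Y}<\abs{X}$ then $\abs{Y}\le^*\abs{X}$, so $\abs{X}=^*\abs{Y}$, contradicting $\abs{Y}<\abs{X}$), together with Higasikawa's $\WPP \limplies \ACWO$ and Jensen's $\ACWO \limplies \DC$ (both cited in Corollary \ref{cor: surj WF consequences}), we obtain $\PP \limplies \DC$. Proposition \ref{prop: ZF+DC} then gives, for every fixed pair $(x,y)$, the chain $\WF^x_{y1} \liff \WF^x_{y2} \liff \WF^x_{y3} \liff \WF^x_{y4}$. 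Combining this with the previous paragraph, any $\WF^x_{yk}$ is equivalent to, say, $\WF^i_{b2}$, so all the $\WF^x_{yk}$ are pairwise equivalent and Figure \ref{prop: ZF+DC} collapses to a single point.

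I expect the only genuinely load-bearing subtlety to be the need for full $\DC$ (rather than merely $\CC$) in the previous step: the implication $\WF^x_{y2} \limplies \WF^x_{y1}$ — passing from ``no infinite descending chain'' to ``well-founded'' — is exactly Spector's characterization (Lemma \ref{lem: Spector DC equiv}) and fails under $\CC$ alone, so it is essential that $\PP$ yields all of $\DC$ and not just countable choice. Everything else is a routine application of $\CSB$, Lemma \ref{prop: CSB etc.}, and the two propositions recording the effects of $\CC$ and $\DC$.
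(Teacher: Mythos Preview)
Your proposal is correct and follows essentially the same two-step strategy as the paper: use $\PP$ to identify $\leq$ with $\leq^*$ (collapsing the $x,y$ indices) and use $\PP\limplies\DC$ together with Proposition~\ref{prop: ZF+DC} (collapsing the $k$ index). The only notable difference is that the paper obtains $\PP\limplies\DC$ by citing Theorem~\ref{thm: PP equiv conjucts} directly, whereas you route it through $\CSB^*\limplies\WPP\limplies\ACWO\limplies\DC$; both are fine, and your route has the virtue of not invoking a theorem whose stated form carries a $\WF^i_{i2}$ hypothesis. One small point: your claim that ``for each fixed $k$'' the hypotheses coincide is not literally true at $k=4$, where the hypothesis is a given sequence of functions rather than a cardinality relation---but this does no damage, since your final reduction to $\WF^i_{b2}$ only needs the horizontal collapse at some $k\le 3$.
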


\begin{proof}
    From Theorem \ref{thm: PP equiv conjucts}, we have $\PP\limplies\DC$, and clearly $\PP\limplies\CSB^*$, so the only additional implications are $\WF^i_{sk} \limplies \WF^i_{bk} \limplies \WF^s_{ik}$ and $\WF^i_{sk} \limplies \WF^s_{sk}$; and for the former, it suffices to show that $\WF^i_{sk} \limplies \WF^s_{ik}$. Again, we prove this for the $k=3$ cases, i.e. $\WF^i_{s3} \limplies \WF^s_{s3}$ and $\WF^i_{s3} \limplies \WF^s_{i3}$, and the rest are similar:
    
    Suppose $(A_i)$ is an infinite sequence of sets such that $\abs{A_0}\geq^*\abs{A_1}\geq^*\abs{A_2}\geq^*\cdots$. From $\PP$, we have $\abs{A_0}\geq\abs{A_1}\geq\abs{A_2}\geq\cdots$. Then from $\WF^i_{s3}$, it follows that there exists $n \in \bN$ such that $\abs{A_n}\leq^*\abs{A_{n+1}}$, proving $\WF^s_{s3}$; and then from $\PP$, we have $\abs{A_n}\leq\abs{A_{n+1}}$, proving $\WF^s_{i3}$.
\end{proof}

\section{Literature Review}\label{sec: lit review}

Although the $\WF$'s are not provable in $\ZF$, each $\WF$ is itself a theorem of $\ZFC$. As far as we can tell, it is unknown whether any of these $\WF$'s imply $\AC$. The notions of well-foundedness that have been studied the most are $\WF_{i2}^i$ -- see, for example, \cite{Karagila2012}, \cite[(B)]{Pel78}, and \cite[Form 7]{HR98} -- and $\WF_{i3}^i$ -- see, for example, \cite[$\NDS$]{HT2015}. As mentioned in the introduction, there are some instances where these forms have been conflated -- see, for example, \cite[$\NDS$]{BM90}, which is stated as $\WF_{i2}^i$, and then immediately reformulated as $\WF_{i3}^i$.

As also mentioned earlier, these forms of well-foundedness are generally brought up alongside the question of whether they imply $\AC$ -- see \cite{Pel78} and \cite{BM90}. Although Banaschewski and Moore seem to believe that the answer to this is almost certainly negative (see \cite[Open problems]{BM90}), Howard and Tachtsis argue that such a non-implication cannot be easily expected or accepted (see \cite[Introduction]{HT2015}). Moreover, Karagila notes that although there is no positive result yet, Theorem \ref{thm: Karagila theorem} suggests that $\AC$ might be equivalent to $\WF_{i2}^i$ (see \cite[Introduction]{Karagila2012}).

In \cite[Theorem 1]{HT2015}, Howard and Tachtsis showed that if there is a Dedekind set, then $\WF_{i3}^i$ fails. Their proof, however, also demonstrates a failure of $\WF_{i4}^i$, via their construction of an infinite strictly-nested sequence of sets $(S_x)$, where $x<y$ implies $S_x \subsetneq S_y$, and consequently $\abs{S_x} \neq \abs{S_y}$, since the $S_x$'s are Dedekind sets. They also showed that the converse is not necessarily true, i.e. the absence of Dedekind sets does not necessarily imply $\WF_{i3}^i$.

\begin{theorem}[See proof of Theorem 1 in \cite{HT2015}] $\WF_{i4}^i$ implies that every Dedekind-finite set is finite, i.e. there does not exist a Dedekind set. 
\end{theorem}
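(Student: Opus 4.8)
The plan is to prove the contrapositive: assuming a Dedekind set $S$ exists, I will exhibit an infinite sequence of sets $(A_n)_{n\in\bN}$ together with injections $f_n\colon A_{n+1}\to A_n$ such that $\abs{A_n}\not\le\abs{A_{n+1}}$ for \emph{every} $n$, which refutes $\WF_{i4}^i$. Since ``every Dedekind-finite set is finite'' is precisely the negation of ``a Dedekind set exists,'' this suffices. The construction has to be choice-free, so I will build the $A_n$ from $S$ by a single definable recipe.

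Concretely, let $Y$ be the set of all finite one-to-one sequences from $S$, and put $A_n=\{\,s\in Y: s\text{ has length }\ge n\,\}$, with $f_n\colon A_{n+1}\to A_n$ the inclusion map (an injection). Then $A_0\supseteq A_1\supseteq A_2\supseteq\cdots$, and I claim three things. First, each inclusion is \emph{proper}: because $S$ is infinite, an ordinary induction on $n$ — using no choice, since at each step one merely asserts that a given finite subset of $S$ is not all of $S$ — produces a one-to-one sequence from $S$ of length exactly $n$, and such a sequence lies in $A_n\setminus A_{n+1}$. Second, $Y$ is Dedekind-finite, hence so is each $A_n\subseteq Y$; indeed a sequence of distinct elements of $Y$ is exactly the data $(y_k)_{k\in\bN}$ fed into the proof of Theorem~\ref{thm: tarski no bijection}, where the construction of $\Sigma$ there turns it into an infinite one-to-one sequence from $S$, contradicting the Dedekind-finiteness of $S$. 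Third, combining these: if $A_{n+1}\subsetneq A_n$ and $A_n$ is Dedekind-finite, then any injection $A_n\to A_{n+1}$, composed with the inclusion $A_{n+1}\hookrightarrow A_n$, would be a non-surjective injection $A_n\to A_n$, which is impossible for a Dedekind-finite set (equivalently: $A_{n+1}\subsetneq A_n$ gives $\abs{A_{n+1}}<\abs{A_n}$, so $\abs{A_n}\le\abs{A_{n+1}}$ would contradict $\CSB$, Lemma~\ref{prop: CSB etc.}(1)). Hence $\abs{A_n}\not\le\abs{A_{n+1}}$ for all $n$, and $(A_n),(f_n)$ witness $\neg\WF_{i4}^i$.

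The only step with real content is the Dedekind-finiteness of the set of finite one-to-one sequences from $S$, and this is exactly the combinatorial core of Theorem~\ref{thm: tarski no bijection}; so rather than reproducing the $\Sigma$-argument I would simply cite it. Everything else is routine bookkeeping: that subsets (in particular the $A_n$) of a Dedekind-finite set are Dedekind-finite, that an infinite set has one-to-one sequences of every finite length, and the standard characterization of Dedekind-finiteness used in the last step. An alternative is to imitate the nested construction of Howard and Tachtsis so that each $A_n$ is itself a Dedekind set, but recycling Tarski's lemma is the shortest route, and it also explains why the ``obvious'' decreasing sequence must be replaced by the sequence-of-sequences trick: one cannot simply delete elements of $S$ one at a time, since that would require a one-to-one $\omega$-sequence inside $S$.
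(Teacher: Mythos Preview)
Your proof is correct and follows essentially the same strategy the paper describes (via Howard--Tachtsis): from a Dedekind set, manufacture a strictly nested $\omega$-sequence of Dedekind-finite sets, whence proper inclusion gives strict cardinal inequality at every stage and $\WF_{i4}^i$ fails. Your concrete realization $A_n=\{s\in Y:\text{length}(s)\ge n\}$ is clean and, by recycling the $\Sigma$-construction from Theorem~\ref{thm: tarski no bijection} to get Dedekind-finiteness of $Y$ (hence of each $A_n$), you avoid reproving the combinatorial core; the paper likewise only cites \cite{HT2015} rather than spelling out a proof.
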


Let $\ACLO$ and $\ACWO$ denote Choice for linearly ordered families of non-empty sets \cite[Form 202]{HR98} and Choice for well-ordered families of non-empty sets \cite[Form 40]{HR98}. Howard and Tachtsis use the variation $\mc{N}12(\aleph_1)$ of the Basic Fraenkel Model as defined in \cite{HR98} -- where the set of atoms is taken to have cardinality $\aleph_1$ and finite supports are replaced with countable ones -- to prove the following strong non-implication:

\begin{theorem}[Theorem 2 of \cite{HT2015}]\label{thm: ACLO and ACWO dont imply NDS}
$\ACLO$ does not imply $\WF_{i3}^i$ in $\ZFA$. Hence, $\ACWO$ does not imply $\WF_{i3}^i$ in $\ZFA$ either.
\end{theorem}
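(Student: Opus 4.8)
The statement is an independence result, so the plan is to build a permutation (Fraenkel--Mostowski) model of $\ZFA$ in which $\ACLO$ holds while $\WF_{i3}^i$ fails; since $\ACLO \limplies \ACWO$ is provable in $\ZF$ (a well-ordered family of non-empty sets is in particular a linearly ordered one), the very same model automatically witnesses that $\ACWO$ does not imply $\WF_{i3}^i$, which gives the ``Hence'' clause with no extra work. Concretely, I would work with $\mathcal{N} = \mathcal{N}12(\aleph_1)$: a set $A$ of atoms with $\abs{A} = \aleph_1$, the group $G$ of all permutations of $A$, and the normal filter of subgroups generated by the pointwise stabilizers $G_E$ of countable $E \subseteq A$; then $\mathcal{N}$ is the class of hereditarily symmetric sets and every $x \in \mathcal{N}$ has a least, countable, support. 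First I would record the structural facts that drive everything: in $\mathcal{N}$ every subset of $A$ is countable or co-countable, there is no injection from $\omega_1$ into $A$ (so $A$ behaves like an ``$\aleph_1$-amorphous'' set), and $\CC$ holds (a countable union of countable supports is countable). The last point already flags why the proof cannot be routine: with $\CC$ in force $\mathcal{N}$ has no Dedekind sets, so the shortcut to $\neg\WF_{i3}^i$ via Theorem~\ref{thm: tarski no bijection} (and the Howard--Tachtsis construction of a strictly nested sequence of Dedekind sets) is unavailable, and the required descending sequence of cardinals must be produced by hand from the $\aleph_1$-level amorphousness of $A$.

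For $\ACLO$ in $\mathcal{N}$ I would first prove $\ACWO$ and then reduce $\ACLO$ to it. For $\ACWO$: given a family $(X_\alpha)_{\alpha<\lambda}$ in $\mathcal{N}$ with support $E$, the canonical map sending $\alpha$ to the least support of $X_\alpha$ is $G_E$-fixed, so each of its values is a countable $G_E$-invariant set of atoms and hence is contained in $E$; thus every $X_\alpha$ has support $\subseteq E$. Fixing a countably infinite $E^* \subseteq A \setminus E$, moving any element of a nonempty $X_\alpha$ by a suitable $\pi \in G_E$ shows $X_\alpha$ has an element with support $\subseteq E \cup E^*$, and selecting one such element for every $\alpha$ (using Choice in the ground universe) yields a choice function with support $\subseteq E \cup E^*$, hence lying in $\mathcal{N}$. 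For $\ACLO$ the extra difficulty is that a linear order on a family need not be rigid under $G_E$, so one cannot conclude immediately that the members have support $\subseteq E$; here I would analyze the $G_E$-orbits inside a linearly ordered family — each carries a transitive, order-preserving $G_E$-action, and this constrains it enough that a representative can be chosen definably from $E$ and the order — thereby reducing $\ACLO$ in $\mathcal{N}$ to the $\ACWO$ just established. Carrying out this orbit analysis, and confirming that the known base-case behaviour of $\mathcal{N}12$ is unaffected by passing to $\aleph_1$-many atoms and countable supports, is the first main obstacle.

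For $\neg\WF_{i3}^i$ in $\mathcal{N}$ I would exhibit an $\omega$-sequence of sets $(A_n)$ with $\abs{A_0} \geq \abs{A_1} \geq \cdots$ but $\abs{A_n} > \abs{A_{n+1}}$ for every $n$ — equivalently, via $\CSB$, a strictly $\leq$-decreasing $\omega$-chain of cardinals. The sets $A_n$ would be obtained by iterating a ``length-decreasing'' operation on $A$ one level above the construction in Theorem~\ref{thm: tarski no bijection} (for instance suitable families of finite or countable injective sequences from $A$, or nested families of such), so that inclusion/restriction maps give $\abs{A_{n+1}} \leq \abs{A_n}$ for free; the crux is that there is no injection $A_n \to A_{n+1}$ in $\mathcal{N}$. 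That non-existence is a support-counting argument: any such injection in $\mathcal{N}$ has a countable support $E$, and by feeding it elements of $A_n$ whose supports are generic over $E$ and tracking their images one contradicts injectivity — the combinatorial role played by Dedekind-finiteness in the proof of Theorem~\ref{thm: tarski no bijection} being played here by the ``$\aleph_1$-amorphousness'' of $A$. Pinning down the right $(A_n)$ and pushing this count through is the second obstacle, and I expect it to be the harder one precisely because, with $\CC$ holding in $\mathcal{N}$, one cannot fall back on a genuine Dedekind set. Once both parts are in place, $\mathcal{N} \models \ZFA + \ACLO + \neg\WF_{i3}^i$; and since $\vdash_{\ZF} \ACLO \limplies \ACWO$, also $\mathcal{N} \models \ZFA + \ACWO + \neg\WF_{i3}^i$, which completes the proof of both assertions.
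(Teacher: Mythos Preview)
Your proposal is aligned with the paper's treatment: the paper does not give its own proof of this theorem but simply cites Howard--Tachtsis and records that the witnessing model is $\mathcal{N}12(\aleph_1)$, the variant of the basic Fraenkel model with $\aleph_1$ atoms and countable supports --- exactly the model you set up. So at the level of ``which model and which overall plan,'' you and the paper agree, and there is nothing further in the paper to compare against.

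That said, your outline is considerably more detailed than anything in the paper, and a couple of the details you sketch deserve a warning. First, your route to $\ACLO$ via ``prove $\ACWO$, then reduce $\ACLO$ to $\ACWO$ by orbit analysis'' is not the standard argument for this model and the orbit step is genuinely delicate: a $G_E$-orbit inside a linearly ordered family can be an infinite dense order with many order-automorphisms, so ``transitive order-preserving action'' alone does not pin down a definable representative; one typically argues instead that in this model any linearly orderable set is already well-orderable (using the structure of supports), which collapses $\ACLO$ to $\ACWO$ directly rather than by a reduction. Second, for $\neg\WF_{i3}^i$ you correctly flag that $\CC$ holds in $\mathcal{N}12(\aleph_1)$, so no Dedekind-set shortcut is available; Howard--Tachtsis do give an explicit descending sequence built from $A$, and your ``iterated length-decreasing operation on sequences from $A$'' is the right genre, but until you fix the actual $A_n$ and carry out the support-counting, this remains the real work. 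None of this contradicts the paper --- it simply goes beyond it --- but you should be aware that the two ``obstacles'' you name are where essentially all of the content of the Howard--Tachtsis proof lives.
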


Using a refinement by Pincus (see \cite{Pincus72})  of Jech and Sochor's Second Embedding Theorem (\cite{JS63}; see \cite[Theorem 6.8]{Jech73}), Howard and Tachtsis transfer the last assertion in Theorem \ref{thm: ACLO and ACWO dont imply NDS} to $\ZF$, thereby obtaining:

\begin{theorem}[Theorem 3 of \cite{HT2015}]
$\ACWO$ does not imply $\WF_{i3}^i$ in $\ZF$.
\end{theorem}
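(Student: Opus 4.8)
The plan is to transfer the ZFA non-implication (Theorem 2 of \cite{HT2015}, which lives in the FM-model $\mc{N}12(\aleph_1)$) to a genuine model of $\ZF$ using the Jech--Sochor embedding machinery. The statement to be proved, ``$\ACWO$ does not imply $\WF_{i3}^i$ in $\ZF$,'' is exactly the $\ZF$-analogue of the ZFA result ``$\ACWO$ does not imply $\WF_{i3}^i$ in $\ZFA$'' that was established one theorem earlier, so the work is entirely in the transfer, not in building a new model.

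First I would recall the setup: in the FM-model $\M = \mc{N}12(\aleph_1)$ one has $\ACWO$ holding while $\WF_{i3}^i$ fails, the failure being witnessed by a concrete strictly $\leq$-decreasing sequence of cardinals $(\kappa_n)_{n\in\bN}$ (equivalently a sequence of sets $(A_n)$ with $\abs{A_0}\geq\abs{A_1}\geq\cdots$ and no $n$ with $\abs{A_n}=\abs{A_{n+1}}$). The key point is that $\WF_{i3}^i$ is (the negation of) a \emph{boundable} / sufficiently low-rank statement: its failure is witnessed by a single $\omega$-sequence of sets, hence by a set of bounded rank over the atoms, so it is the kind of sentence to which the Jech--Sochor Second Embedding Theorem (\cite[Theorem 6.8]{Jech73}), in the refined form used by Howard and Tachtsis, applies. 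Then I would invoke that refined embedding theorem: it produces a symmetric extension $N$ of a model of $\ZF$ such that the relevant bounded-rank fragment of $\M$ embeds into $N$, so the failure of $\WF_{i3}^i$ persists in $N$ (the decreasing sequence and the absence of the relevant bijections/injections are preserved, since injections between sets of bounded rank are themselves of bounded rank). The second ingredient is that $\ACWO$ -- equivalently the relevant well-ordered-choice instances -- is among the statements preserved by this transfer; this is the same reason $\ACWO$ survived in the ZFA model, and the refinement of Jech--Sochor used here is precisely one that keeps $\ACWO$ true on the $\ZF$ side.

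Concretely, the steps in order are: (i) fix in $\M$ the witness to $\neg\WF_{i3}^i$ and check it has bounded rank over the atoms; (ii) fix in $\M$ the witnesses (or the general argument) showing $\ACWO$ holds, and note this too is of the form preserved by the embedding; (iii) apply the refined Second Embedding Theorem to obtain a $\ZF$-model $N$ together with an embedding of the bounded fragment of $\M$; (iv) verify that $\neg\WF_{i3}^i$ transfers down -- the decreasing sequence embeds, and no new bijections $A_n\to A_{n+1}$ appear because any such bijection would already have bounded rank and hence reflect back into $\M$; (v) verify $\ACWO$ holds in $N$, quoting the preservation property of the refinement. Conclude that $N\models \ZF + \ACWO + \neg\WF_{i3}^i$.

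The main obstacle is step (v) together with the precise form of step (iii): the vanilla Jech--Sochor theorem transfers the \emph{failure} of a boundable statement but does not by itself guarantee that a choice principle like $\ACWO$ survives, so one genuinely needs the \emph{refinement} of the embedding theorem (the one Howard and Tachtsis employ) that simultaneously preserves a failure of a boundable statement and the truth of $\ACWO$. Setting up that refined embedding correctly -- identifying which supports/filters to use so that well-ordered choice is retained while the pathological decreasing sequence is retained -- is the delicate part; everything else (boundedness of the witness, preservation of non-bijectivity under the embedding) is routine rank-counting. Since the excerpt already attributes this to Howard and Tachtsis and states it as their Theorem 3, in the write-up I would simply cite \cite{HT2015} for the refined embedding and sketch the boundability check, rather than reconstruct the forcing.
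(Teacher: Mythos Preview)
Your proposal is correct and matches the approach the paper attributes to Howard and Tachtsis: the paper itself does not prove this theorem but simply reports that they transfer the $\ZFA$ result (their Theorem~2, in $\mc{N}12(\aleph_1)$) to $\ZF$ ``using a refinement of Jech and Sochor's Second Embedding Theorem,'' which is exactly the strategy you outline. You are also right that the subtle point is step~(v), the preservation of $\ACWO$, since that is not a boundable statement and requires the refined transfer machinery rather than the vanilla Jech--Sochor theorem; this is presumably why the $\ZF$ version is stated only for $\ACWO$ and not for the stronger $\ACLO$ that held in the $\ZFA$ model.
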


They also obtain the following $\ZFA$ result (no transfer to $\ZF$ is mentioned).

\begin{theorem}[Theorem 4 of \cite{HT2015}] The statement ``for all infinite cardinals $m$, $m + m = m$ \cite[Form 3]{HR98}" does not imply $\WF_{i3}^i$ in $\ZFA$. 
\end{theorem}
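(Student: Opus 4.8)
The statement concerns $\ZFA$, so no Jech--Sochor transfer is needed: it suffices to produce one permutation (Fraenkel--Mostowski) model $\mathcal{N}$ of $\ZFA$ that satisfies Form~3 (for every infinite cardinal $m$, $m+m=m$) but fails $\WF_{i3}^i$. Two features must be built into $\mathcal{N}$ at once. To make $\WF_{i3}^i$ fail, I would take the set of atoms to be a disjoint union $A=\bigsqcup_{n\in\omega}A_n$ of countably many infinite ``blocks,'' with the symmetry group $G$ consisting of permutations that fix each block setwise (and respect whatever internal structure the blocks carry), and with the normal filter generated by pointwise stabilizers of finite subsets of $A$. Then the tail sets $T_n:=\bigsqcup_{k\ge n}A_k$ lie in $\mathcal{N}$ with empty support, the inclusions give $|T_0|\ge|T_1|\ge\cdots$ in $\mathcal{N}$, and one checks that there is no injection $T_n\to T_{n+1}$ in $\mathcal{N}$: if $\phi\colon T_n\to T_{n+1}$ were symmetric with finite support $F$, then for any atom $a\in A_n\setminus F$ the element $\phi(a)$ lies in some block $A_j$ with $j>n$, so every $g\in\mathrm{stab}(F)$ that moves only atoms of $A_n$ fixes $\phi(a)$ and hence $\phi(g(a))=g(\phi(a))=\phi(a)$; since the internal group of $A_n$ still acts with infinite orbits on $A_n\setminus F$, this collapses infinitely many atoms and contradicts injectivity. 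Thus $(T_n)_{n\in\omega}$ witnesses $\neg\WF_{i3}^i$.

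The real work, and the main obstacle, is to arrange that Form~3 holds, i.e.\ that \emph{every} infinite set in $\mathcal{N}$ satisfies $|X|=|X|+|X|$. Note first that Form~3 rules out Dedekind sets entirely --- if $X$ is infinite and $X\cong X\sqcup X$, then composing the inclusion of one copy with such a bijection gives a non-surjective self-injection of $X$, making $X$ Dedekind-infinite --- so the blocks cannot be amorphous or Russell-type, and a naively built model (full symmetric group on each block, or a vector-space structure with the full linear group) fails precisely here, since it produces amorphous pieces. The blocks must instead carry a ``self-divisible'' structure and a symmetry group tame enough that (i) each block is explicitly $2$-divisible via a symmetric bijection onto two copies of itself, and (ii) this $2$-divisibility descends to every symmetric subset and quotient. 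Establishing (ii) is the technical core: one needs a normal-form analysis of arbitrary symmetric sets showing that, given an infinite $X\in\mathcal{N}$ with finite support $E$, the $\mathrm{stab}(E)$-orbit structure of the atoms occurring hereditarily in $X$ is compatible with the block-doubling maps, so that those maps together with the $\mathrm{stab}(E)$-equivariant combinatorics can be assembled into a symmetric bijection $X\to X\sqcup X$ (with support some finite enlargement of $E$).

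Once both parts are in place, $\mathcal{N}$ is a model of $\ZFA$ satisfying Form~3 and $\neg\WF_{i3}^i$, which is the theorem. I expect Part~2 to be the hard part: the delicate point is to find a block structure rich enough that $2$-divisibility is forced on every infinite set, yet still leaving room for the strictly decreasing chain $T_0\supsetneq T_1\supsetneq\cdots$ of Part~1. Given such a structure, the chain argument of Part~1 is routine, and staying within $\ZFA$ means no embedding theorem is needed.
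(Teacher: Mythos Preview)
The paper does not contain a proof of this theorem: it appears in the literature review (Section~\ref{sec: lit review}) purely as a citation of Howard and Tachtsis \cite{HT2015}, accompanied only by the remark that the argument uses a Fraenkel--Mostowski model of $\ZFA$. There is thus no ``paper's own proof'' to compare your proposal against; the comparison must be with the original source.

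Your outline is architecturally sound --- a permutation model with countably many blocks, tail sets $T_n$ giving the decreasing chain, and a block structure engineered to force Form~3 --- and your Part~1 argument for $\neg\WF_{i3}^i$ is essentially correct, provided the internal group on each block retains infinite orbits off any finite set. But as you yourself flag, Part~2 is not a proof: you state the \emph{desiderata} for the block structure (self-divisibility that descends to all symmetric sets and quotients) without exhibiting any group action that meets them, and the ``normal-form analysis of arbitrary symmetric sets'' you invoke is left entirely unspecified. That analysis is the whole content of the theorem. Getting $m+m=m$ for \emph{every} infinite $m$ in a finite-support permutation model is genuinely delicate, and there is real tension with your Part~1: the very orbit argument that blocks an injection $T_n\to T_{n+1}$ tends, in naive constructions, to manufacture infinite symmetric sets rigid enough to fail $2$-divisibility. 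Until you name a concrete group action on the atoms and carry out the verification of Form~3 against an arbitrary symmetric $X$, what you have is a plausible plan rather than a proof; for the actual construction you should consult \cite{HT2015} directly.
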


For a cardinal $\kappa$, the Principle of Dependent Choice for $\kappa$, denoted $\DC_\kappa$ states that for every non-empty set $X$, if $R$ is a binary relation such that for every ordinal $\alpha < \kappa$, and every $f:\alpha \to X$ there is some $y \in X$ such that $f\ R\ y$ then there is $f : \kappa \to X$ such that for every $\alpha < \kappa$, $f\res\alpha\ R\ f(\alpha)$.   In \cite{Karagila2012}, Karagila argues the relative strength of $\WF_{i2}^i$ by proving independence from $\DC_\kappa$ (for any $\kappa$). More specifically, Karagila argues that when a choice principle is not provable by $\DC_\kappa$, for any $\kappa$, it hints that said principle may be equivalent to the axiom of choice, or that it is ``orthogonal” to $\DC_\kappa$-like principles.

\begin{theorem}[See proof of Theorem 10 in \cite{Karagila2012}]\label{thm: Karagila theorem} 
For every cardinal $\kappa$, it is consistent with $\ZF + \DC_\kappa$ that for every ordinal $\alpha$ there is a decreasing sequence of cardinals of order type $\alpha^*$, i.e., $\alpha$ with the reversed ordering. In particular, $\DC_\kappa$ does not imply $\WF_{i2}^i$ in $\ZF$. 
\end{theorem}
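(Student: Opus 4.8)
The plan is to construct, by a class-length symmetric forcing extension of a model of $\ZFC$, a model $N\models\ZF$ in which $\DC_\kappa$ holds while for every ordinal $\alpha$ there is a set of cardinals whose $<$-order type is $\alpha^{*}$; this proves the theorem, since already a strictly decreasing $\omega^{*}$-chain of cardinals witnesses $\neg\WF_{i2}^{i}$, so $\DC_\kappa\not\limplies\WF_{i2}^{i}$ over $\ZF$. Two tasks must be carried out and verified: the forcing has to manufacture arbitrarily long descending chains of cardinals and keep them genuinely strictly descending in the symmetric model, and the symmetric model has to still satisfy $\DC_\kappa$.

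First I would fix $V\models\ZFC$ and, for each ordinal $\alpha$, design a ``block'': a notion of forcing $\mathbb{Q}_\alpha$ with conditions of size $\le\kappa$, together with an automorphism group $\mathcal G_\alpha\le\Aut(\mathbb{Q}_\alpha)$ and a normal, $\kappa^{+}$-complete filter $\mathcal F_\alpha$ of subgroups of $\mathcal G_\alpha$, whose symmetric submodel adjoins sets $A^{\alpha}_{\beta}$ ($\beta<\alpha$) with $A^{\alpha}_{\beta}\supsetneq A^{\alpha}_{\gamma}$ for $\beta<\gamma$ and, crucially, $\abs{A^{\alpha}_{\gamma}}<\abs{A^{\alpha}_{\beta}}$ for $\beta<\gamma$, so that $\{\abs{A^{\alpha}_{\beta}}:\beta<\alpha\}$ is, under $<$, a reverse-well-ordered chain of cardinals of order type $\alpha^{*}$. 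A natural candidate for $\mathbb{Q}_\alpha$ is a ``nested generic sets'' forcing which adds, for each $\beta<\alpha$, a large generic family $B_\beta$ of new sets, sets $A^{\alpha}_{\beta}=\bigcup_{\gamma\ge\beta}B_\gamma$, lets $\mathcal G_\alpha$ permute the generic objects within each $B_\gamma$, and takes $\mathcal F_\alpha$ generated by pointwise stabilizers of $\le\kappa$-sized sets of coordinates; $\mathbb{Q}_\alpha$ is taken $\kappa^{+}$-closed (decreasing sequences of conditions of length $\le\kappa$ have lower bounds) precisely so that $\DC_\kappa$ can be preserved. One then assembles the blocks into a class-length product $\mathbb{P}$ with Easton/bounded support (choosing closure degrees that increase with $\alpha$ if needed) so that $\mathbb{P}$ is a tame definable class forcing, lets $(\mathcal G,\mathcal F)$ be the resulting product symmetric system, and sets $N$ to be the symmetric class submodel. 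That $N\models\ZF$ is then routine for tame symmetric class forcing: the symmetric forcing theorem applies, and each instance of Collection/Replacement is decided inside a set-sized initial segment of $\mathbb{P}$.

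Next I would verify that the descending chains survive in $N$. Fix $\alpha$ and $\beta<\gamma<\alpha$; I claim $\abs{A^{\alpha}_{\beta}}\not\le\abs{A^{\alpha}_{\gamma}}$. By symmetry, a name $\dot f$ for an injection $A^{\alpha}_{\gamma}\to A^{\alpha}_{\beta}$ has a support $E$ of size $\le\kappa$, so, since $B_\beta\subseteq A^{\alpha}_{\beta}\setminus A^{\alpha}_{\gamma}$ is large, one may pick a generic object $b\in B_\beta$ not mentioned by $E$ and an automorphism in the pointwise stabilizer of $E$ moving $b$; a standard argument then shows no condition can force $\dot f$ injective. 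Since inclusion gives $\abs{A^{\alpha}_{\gamma}}\le\abs{A^{\alpha}_{\beta}}$, the inequality is strict, and the same argument at limit $\gamma$ handles limit stages. Hence $(\abs{A^{\alpha}_{\beta}})_{\beta<\alpha}$ is strictly decreasing, giving the required chain of type $\alpha^{*}$ for every $\alpha$.

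The main obstacle is the last step, $N\models\DC_\kappa$. Given, in $N$, a nonempty set $X$ and a relation $R$ such that every $R$-approximation of length $<\kappa$ extends, I would fix symmetric names $\dot X,\dot R$, a subgroup $H\in\mathcal F$ fixing them, and a condition $p$ forcing the hypothesis, and then build recursively a decreasing sequence of conditions $(p_\xi)_{\xi\le\kappa}$ with $p_0\le p$, where $p_{\xi+1}$ decides the $\xi$-th value of an $R$-chain and lower bounds at limit stages $\le\kappa$ are supplied by $\kappa^{+}$-closure; the point is that only $\le\kappa$ many subgroups of $\mathcal F$ enter the construction, so by $\kappa^{+}$-completeness their intersection is still in $\mathcal F$ and the resulting name for the $R$-chain is symmetric. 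Then $p_\kappa$ forces $\DC_\kappa$ for $(X,R)$, and since every extension of $p$ has a further extension obtained this way, $p$ forces the conclusion. The delicate point --- the one needing genuine care rather than bookkeeping --- is that this $\le\kappa$-length recursion, together with the $\le\kappa$ many coordinates and subgroups it touches, must be confined to a set-sized initial segment of the class forcing $\mathbb{P}$ (the coordinates occurring in $\le\kappa$ many $\le\kappa$-sized supports form a set of ordinals, hence are bounded); the Easton support on $\mathbb{P}$ and the uniform $\kappa^{+}$-closure of the blocks are exactly what make this work. With $\DC_\kappa$ and the long descending chains both in hand in $N$, the theorem and its ``in particular'' clause follow.
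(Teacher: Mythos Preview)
The paper does not contain a proof of this theorem at all: it appears in the literature-review section and is simply quoted as Theorem~10 of \cite{Karagila2012}, with no proof or proof sketch following the statement. So there is no ``paper's own proof'' against which to compare your proposal.

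That said, your outline is very much in the spirit of Karagila's actual argument: one builds a symmetric extension via a (class-sized, Easton-supported) product of $\kappa^{+}$-closed forcings, with automorphism groups permuting the generic coordinates and a $\kappa^{+}$-complete normal filter of subgroups, so that the closure and completeness together preserve $\DC_\kappa$ while the symmetry kills the bijections needed to collapse the nested generic sets to the same cardinality. Your handling of the two key points --- why $\DC_\kappa$ survives and why the decreasing chains are genuinely strict --- is the right shape.

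One slip worth flagging: in the strictness paragraph you want to rule out an injection $A^{\alpha}_{\beta}\to A^{\alpha}_{\gamma}$ (the larger set into the smaller), but you wrote ``a name $\dot f$ for an injection $A^{\alpha}_{\gamma}\to A^{\alpha}_{\beta}$,'' which is the inclusion direction and always exists. The automorphism argument you sketch (move a fresh $b\in B_\beta$ outside the support) is aimed at the correct direction, so this looks like a typo rather than a conceptual error, but as written the paragraph does not prove what it claims.
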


Karagila has pointed out that the proof of Theorem \ref{thm: Karagila theorem}, in fact, establishes that $\DC_\kappa$ does not imply $\WF^s_{s4}$ and $\WF^i_{s4}$. Consequently, $\DC_\kappa$ does not imply $\WF^x_{yk}$ for any $x,y,k$.

Karagila has also pointed out that, by a theorem of Tarski \cite{Tarski1965}, $\WF^s_{i4}$ implies that there are no Dedekind sets.

Let $\mathsf{IP}$ or ``intermediate power" be the statement ``if $\abs{X} \leq^* \abs{Y}$ and $\abs{Y} \not\leq^* \abs{X}$, then there is a $Z$ such that $\abs{X} \leq^* \abs{Z}$ and $\abs{Z} < \abs{Y}$." In \cite{Pel78}, Pelc presents a proof, due to Pincus, of the following equivalence involving $\PP$ and $\CSB^*$.

\begin{theorem}[Theorem 7 of \cite{Pel78}]\label{thm: PP equiv conjucts} Assume $\WF_{i2}^i$. Then $\PP$ is equivalent to the conjunction of $\CSB^*$, $\DC$, and $\mathsf{IP}$.
\end{theorem}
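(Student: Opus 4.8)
The plan is to prove the biconditional in two directions, with the forward direction ($\PP$ implies the conjunction) being routine and not using $\WF_{i2}^i$, and the reverse direction being where all four of $\CSB^*$, $\DC$, $\mathsf{IP}$, and $\WF_{i2}^i$ come into play. For the forward direction: $\PP\limplies\CSB^*$ because $\abs X=^*\abs Y$ gives $\abs X\le^*\abs Y$ and $\abs Y\le^*\abs X$, hence $\abs X\le\abs Y$ and $\abs Y\le\abs X$ by $\PP$, hence $\abs X=\abs Y$ by $\CSB$ (Lemma \ref{prop: CSB etc.}(1)). For $\PP\limplies\mathsf{IP}$: given $\abs X\le^*\abs Y$ and $\abs Y\not\le^*\abs X$, use $\PP$ to fix an injection $X\to Y$ and let $Z$ be its image; then $\abs X\le^*\abs Z$ trivially (since $\abs Z=\abs X$ and $\abs X\le^*\abs X$), while $\abs Z\le\abs Y$, and $\abs Z=\abs Y$ would force $\abs Y=\abs X\le^*\abs X$, contradicting the hypothesis, so $\abs Z<\abs Y$. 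For $\PP\limplies\DC$: $\PP$ at once gives $\WPP$ (if $\abs X\le^*\abs Y$ then $\abs X\le\abs Y$ by $\PP$, and $\abs Y<\abs X$ would contradict $\CSB$), and then $\WPP\limplies\ACWO\limplies\DC$ exactly as in the proof of Corollary \ref{cor: surj WF consequences}.

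For the reverse direction, assume $\CSB^*$, $\DC$, $\mathsf{IP}$, and $\WF_{i2}^i$, suppose $\abs X\le^*\abs Y$, and aim to build an injection $X\to Y$; we may take $X\neq\es$, the other case being trivial. Form the set of cardinals
\[ \mathcal C=\set{\,\abs Z : Z\subseteq Y \text{ and } \abs X\le^*\abs Z\,}, \]
which is a set (obtained by separation from the image of $\mathcal P(Y)$ under cardinality) and is nonempty since $\abs Y\in\mathcal C$. An infinite $<$-descending chain in $\mathcal C$ would be a strictly decreasing $\omega$-sequence of cardinals, which $\WF_{i2}^i$ forbids (using that $\CSB$ holds in $\ZF$); so, by Spector's characterization of $\DC$ (Lemma \ref{lem: Spector DC equiv}), the partial order $(\mathcal C,<)$ is well-founded and therefore has a minimal element $\mu=\abs{Z^*}$ with $Z^*\subseteq Y$ and $\abs X\le^*\abs{Z^*}$. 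I claim $\abs{Z^*}\le^*\abs X$ as well: if not, then $\mathsf{IP}$, applied to $\abs X\le^*\abs{Z^*}$ and $\abs{Z^*}\not\le^*\abs X$, yields a set $W$ with $\abs X\le^*\abs W$ and $\abs W<\abs{Z^*}\le\abs Y$; transporting $W$ to an isomorphic subset of $Y$ (possible since $\abs W\le\abs Y$) exhibits an element of $\mathcal C$ strictly below $\mu$, contradicting minimality. Hence $\abs X=^*\abs{Z^*}$, so $\abs X=\abs{Z^*}$ by $\CSB^*$, and since $Z^*\subseteq Y$ this gives $\abs X\le\abs Y$, which is $\PP$.

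I expect the reverse direction to be the main obstacle: the key move is to guess the right set $\mathcal C$ --- the cardinalities of subsets of $Y$ that still dominate $\abs X$ in the $\le^*$ ordering --- and then to orchestrate the four hypotheses so that $\DC$ (via Spector's lemma) together with $\WF_{i2}^i$ produces a $\le$-minimal such cardinal, $\mathsf{IP}$ forces that cardinal to be $=^*$-equivalent to $\abs X$, and $\CSB^*$ upgrades $=^*$ to $=$. Each of the four assumptions is used exactly once, and dropping any one breaks the argument (and, by the cited independence results, cannot in general be recovered from the others).
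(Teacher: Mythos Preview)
The paper does not actually prove this theorem; it only cites it from \cite{Pel78} and adds the remark that the forward direction (from $\PP$ to the three conjuncts) does not require $\WF_{i2}^i$, whereas the reverse direction does. Your proposal is consistent with that remark and is, as far as I can see, a correct reconstruction of the argument.

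Your forward direction is routine and matches the paper's note that $\WF_{i2}^i$ is not needed there. Your reverse direction is the interesting part, and the strategy you chose --- forming the set $\mathcal C$ of cardinalities of subsets of $Y$ that still $\le^*$-dominate $\abs X$, then using $\WF_{i2}^i$ plus $\DC$ (via Lemma~\ref{lem: Spector DC equiv}) to extract a $\le$-minimal element, then using $\mathsf{IP}$ to show that minimal element must be $=^*$-equivalent to $\abs X$, and finally $\CSB^*$ to finish --- is clean and uses each hypothesis exactly once, as you observe. The small technical points (that $\mathcal C$ is a set via Replacement on $\mathcal P(Y)$, and that the $W$ produced by $\mathsf{IP}$ can be transported to a subset of $Y$ since $\abs W<\abs{Z^*}\le\abs Y$) are handled correctly. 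Since the paper offers no proof to compare against, I cannot say whether your argument coincides with Pincus's original, but it is a valid proof of the stated theorem.
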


Note that Pincus does not require the additional assumption of $\WF_{i2}^i$ to obtain each of the three conjuncts from $\PP$; the absence of a decreasing sequence of cardinalities is, however, used to prove the other direction. 

Pelc defines the notion of a full tree $T_m$ as follows: its minimal element is the cardinal $m$ and for $x \in T_m$, the immediate successors of $x$ are such cardinals $y$ that $2^y = x$. $T_m$ is full iff it has branches of arbitrary finite length. Pelc uses this notion in the following theorem.

\begin{theorem}[Theorem 9 of \cite{Pel78}]
    Let $m$ be an arbitrary cardinal and $T_m$ a full tree. Then $\WF_{i2}^i$ and $\CC$ implies that in $T_m$ there is a $\leq^*$-antichain of cardinality $\aleph_0$.
\end{theorem}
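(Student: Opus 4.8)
The plan is to combine fullness with $\CC$ to select one cardinal at each level of $T_m$, then extract an infinite $\leq^{*}$-antichain from these cardinals by a Ramsey argument whose only bad outcome is ruled out by $\WF_{i2}^{i}$. Write $E(\kappa)=2^{\kappa}$ and let $E^{n}$ denote the $n$-fold iterate of $E$; call $x\in T_m$ an element \emph{of level $n$} when $E^{n}(x)=m$. Cantor's theorem gives $E(\kappa)>\kappa$, hence $E^{k}(\kappa)>\kappa$ for every $k\geq 1$, so no cardinal is an iterated exponential of itself and each element of $T_m$ has a unique level; moreover every element of $T_m$ is $\leq m$, since a non-root element $y$ has $y<2^{y}$ while $2^{y}$ is its parent and so, by induction on level, is $\leq m$. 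By fullness the set $L_n$ of level-$n$ elements is nonempty for each $n$ (a branch of length exceeding $n$ meets level $n$); fullness also forces $m$, and hence every element of $T_m$, to be infinite, since the chains $x_0>x_1>\cdots$ with $2^{x_{k+1}}=x_k$ and $x_0$ finite are bounded in length. Applying $\CC$ to the sequence $(L_n)_{n\geq 1}$, pick $w_n\in L_n$ for each $n\geq 1$; the $w_n$ are pairwise distinct, lying at distinct levels.

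The first step of the argument proper is a structural lemma: if $x,y\in T_m$ and $x\leq^{*}y$, then the level of $x$ is at least that of $y$. This uses two $\ZF$ facts — that $A\leq^{*}B$ implies $2^{A}\leq^{*}2^{B}$ (a surjection $g\colon B\to A$ induces the surjection $S\mapsto g[S]$ of $2^{B}$ onto $2^{A}$), and that no set surjects onto its power set. If $x$ had level $i$ and $y$ level $j$ with $i<j$, applying the first fact $i$ times to $x\leq^{*}y$ gives $m=E^{i}(x)\leq^{*}E^{i}(y)$; but $E^{i}(y)$ has level $j-i\geq 1$, so $2^{E^{i}(y)}=E^{i+1}(y)$ still lies in $T_m$ and is therefore $\leq m$, whence $2^{E^{i}(y)}\leq^{*}E^{i}(y)$ — $E^{i}(y)$ would surject onto its own power set, a contradiction. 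In particular $w_i\not\leq^{*}w_j$ whenever $i<j$, so for $i<j$ the cardinals $w_i,w_j$ are $\leq^{*}$-incomparable exactly when $w_j\not\leq^{*}w_i$.

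Next I would colour each pair $\{i,j\}$ with $i<j$ \emph{bad} if $w_j\leq^{*}w_i$ and \emph{good} otherwise, and apply the infinite Ramsey theorem for pairs to this $2$-colouring of the index set — an instance provable in $\ZF$ since the index set is a copy of $\bN$, and in any case a consequence of $\CC$. If there is an infinite good-homogeneous set $S$, then $\{w_n:n\in S\}$ is, by the structural lemma and goodness, a $\leq^{*}$-antichain of cardinality $\aleph_0$, and we are done. So suppose instead $n_1<n_2<n_3<\cdots$ is bad-homogeneous, so that $w_{n_1}\geq^{*}w_{n_2}\geq^{*}w_{n_3}\geq^{*}\cdots$. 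Here is the crux: for each $k$, a surjection $g_k\colon W_{n_k}\to W_{n_{k+1}}$ witnessing $w_{n_{k+1}}\leq^{*}w_{n_k}$ (it exists, since all $W_{n_k}$ are nonempty) induces the \emph{injection} $T\mapsto g_k^{-1}[T]$ of $2^{W_{n_{k+1}}}$ into $2^{W_{n_k}}$, so $2^{w_{n_1}}\geq 2^{w_{n_2}}\geq 2^{w_{n_3}}\geq\cdots$ in the injective ordering. Since $2^{w_{n_k}}=E(w_{n_k})$ has level $n_k-1$ in $T_m$ and the numbers $n_k-1$ are pairwise distinct, the cardinals $2^{w_{n_k}}$ are pairwise distinct; hence $2^{w_{n_1}}>2^{w_{n_2}}>2^{w_{n_3}}>\cdots$ is a strictly $\leq$-decreasing sequence of cardinals, contradicting $\WF_{i2}^{i}$. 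So the bad case cannot arise, and the proof is complete.

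The step I expect to be the main obstacle is this last one. The hypothesis $\WF_{i2}^{i}$ constrains only the injective ordering, whereas the Ramsey dichotomy naturally delivers a descending chain in the \emph{surjective} ordering, and there is no $\ZF$-implication $\geq^{*}\Rightarrow\geq$. Passing to power sets is the bridge: a surjection upstairs becomes an injection downstairs, and the fact that the chosen $w_{n_k}$ occupy pairwise distinct levels of $T_m$ is exactly what upgrades the resulting injective chain from non-increasing to \emph{strictly} decreasing, so that $\WF_{i2}^{i}$ can be applied. A secondary point to verify is that only $\CC$, not $\DC$, is used: the sole genuine appeal to choice is the selection of the $w_n\in L_n$, an honest countable-choice instance, while the Ramsey step is applied to an explicitly $\bN$-indexed family and is choice-free.
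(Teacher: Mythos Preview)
The paper does not give its own proof of this theorem; it is stated in the literature-review section purely as a citation of Pelc's result, with no argument supplied. So there is nothing in the paper to compare your attempt against.

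That said, your argument is correct. The structural lemma is sound: the surjective Cantor theorem (no set surjects onto its power set) together with the monotonicity of $\kappa\mapsto 2^{\kappa}$ under $\leq^{*}$ indeed forces $\leq^{*}$-comparability to respect levels in the direction you claim. The Ramsey dichotomy is a $\ZF$ theorem on $\bN$, and the bad case is dispatched correctly by the observation that $\kappa\leq^{*}\lambda\Rightarrow 2^{\kappa}\leq 2^{\lambda}$ upgrades a $\leq^{*}$-chain to a $\leq$-chain at the level of power sets, while distinct levels guarantee the cardinals $2^{w_{n_k}}$ are genuinely distinct (so the chain is strict by $\CSB$), contradicting $\WF_{i2}^{i}$. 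One cosmetic point: you do not actually need to \emph{choose} the surjections $g_k$ in the bad case, since the implication $\kappa\leq^{*}\lambda\Rightarrow 2^{\kappa}\leq 2^{\lambda}$ is already a statement about cardinals; the only genuine use of $\CC$ is, as you note, the selection of the $w_n$.
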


\section{Open Problems}
The first, and possibly most obvious, style of open problems involves $\WF$ and $\AC$. As mentioned earlier, the question of whether some form of well-foundedness implies Choice has been raised in several papers, including \cite{Pel78}, \cite{BM90}, \cite{Karagila2012}, and \cite{HT2015}. Through Definition \ref{def: well-foundedness}, this question breaks into a long list of questions, with the extremes being:
\begin{enumerate}
    \item[1.] $\WF^s_{b1} \implies \AC$?
    \item[2.] $\WF^i_{s4} \implies \AC$?
    \item[3.] $\WF^s_{s4} \implies \AC$?
\end{enumerate}
In this case, a negative answer to (1) or an affirmative answer to (2) or (3) will give us the most information in relation to the other $\WF$ forms. On the other hand, since $\WF^s_{b1}$ is relatively the strongest form on our list, it would likely be more capable of providing the machinery required to prove $\AC$.

Figure \ref{prop: full figure} suggests another natural genre of questions: Are there any other implications without additional assumptions among the various well-foundedness principles? Are any implications provably irreversible? For example, for $k \in \set{1,2,3,4}$:
\begin{enumerate}
    \item[4.] $\WF^i_{bk} \implies \WF^s_{ik}$, $\WF^s_{sk} \implies \WF^s_{ik}$, or $\WF^s_{ik} \implies \WF^s_{bk}$?
    \item[5.] $\WF^i_{sk} \centernot\implies \WF^i_{bk}$ or $\WF^s_{ik} \centernot\implies \WF^s_{sk}$?
\end{enumerate}
Note that if there were forms $\WF_a$ and $\WF_b$ of well-foundedness with $\WF_a \implies \WF_b \centernot\implies \WF_a$, then $\WF_b \centernot\implies \AC$.

The third list of questions involves implications between $\WF$ and other choice principles. Some of these questions have previously been raised in the literature; see, for example, \cite{BM90} and \cite{Pel78}. In the following list, a question of the form ``$\WF \implies X$" is really asking whether there is some well-foundedness principle that implies $X$.
\begin{enumerate}
    \item[6.] $\WF \implies \CSB^*$, or vice versa?
    \item[7.] $\WF \implies \PP$, or vice versa?
    \item[8.] Does $\WF$ imply the statement ``if $\abs{X} \leq^* \abs{Y}$, then $\abs{Y} \not< \abs{X}$," i.e. $\WPP$?
\end{enumerate}
 Theorem \ref{thm: surj WF implies Dual CSB} answers (6) and (8) in the case of $\WF^s_{bk}$, but other cases remain open.

Since $\WF^i_{i4}$ implies that every Dedekind-finite set is finite, i.e. the trichotomy statement $\aleph_0$-$\mathsf{TC}$ ``for every $X$, $\abs{X} < \aleph_0$, $\abs{X} > \aleph_0$, or $\abs{X} = \aleph_0$," one may ask
\begin{enumerate}
    \item[9.] Is there a $\WF$ that implies $\aleph_1$-$\mathsf{TC}$, i.e. the trichotomy statement ``for every $X$, $\abs{X} < \aleph_1$, $\abs{X} > \aleph_1$, or $\abs{X} = \aleph_1$"?
\end{enumerate}
Using \cite{HR98} as a reference, one could come up with many more such questions.

Finally, since $\DC$ causes Figure \ref{prop: full figure} to collapse vertically and $\PP$ causes said figure to collapse completely (see propositions \ref{prop: ZF+DC} and \ref{prop: ZF+PP}), one may ask
\begin{enumerate}
    \item[10.] Is there some principle that causes a horizontal collapse of Figure \ref{prop: full figure}, but not a vertical one? 
\end{enumerate}
Such a principle would not only have to be weaker than $\PP$, but would also have to be incomparable with $\DC$.

\textit{Acknowledgements.} We thank Ronnie (Ruiyuan) Chen for his helpful comments and questions at the Michigan Logic Seminar, Asaf Karagila for his helpful comments on the previous draft of this article, and Paul Howard for earlier conversations on this topic.

\bibliography{References}
\bibliographystyle{alpha}


\end{document}